\newcommand\ant{\nicefrac12}
\newcommand{\alg}{\mathbf}
\newcommand{\class}{\mathsf}
\newcommand{\logic}{\textsc}
\newcommand{\pair}[2]{\langle #1, #2 \rangle}
\providecommand*{\Dashv}{%
  \mathrel{%
    \mathpalette\@Dashv\vDash
  }%
}
\newcommand*{\@Dashv}[2]{%
  \reflectbox{$\m@th#1#2$}%
}
\makeatother\newcommand{\iso}{\cong}
\newcommand{\equals}{\approx}
\newcommand{\assign}{\mathrel{:=}}
\newcommand{\boldrho}{\boldsymbol{\rho}}
\newcommand{\boldtau}{\boldsymbol{\tau}}
\DeclareMathOperator{\Alg}{Alg}
\newcommand{\Tarski}{\widetilde{\Omega}}
\DeclareMathOperator{\Ker}{Ker}
\newcommand{\AlgH}{\mathbb{H}}
\newcommand{\AlgI}{\mathbb{I}}
\newcommand{\AlgS}{\mathbb{S}}
\newcommand{\AlgP}{\mathbb{P}}
\newcommand{\AlgPS}{\mathbb{P}_{\mathrm{SD}}}
\newcommand{\AlgPU}{\mathbb{P}_{\mathrm{U}}}
\newcommand{\ddtset}{I}
\newlength\auxskip
\theoremstyle{theorem}
\newtheorem{theorem}{Theorem}[section]
\newtheorem{lemma}[theorem]{Lemma}
\newtheorem{proposition}[theorem]{Proposition}
\newtheorem{corollary}[theorem]{Corollary}
\newtheorem{fact}[theorem]{Fact}
\theoremstyle{definition}
\newtheorem{definition}[theorem]{Definition}
\newtheorem{example}[theorem]{Example}
\newtheorem{assumption}[]{Assumption}
\title{The external version of a subclassical logic}
\author{Massimiliano Carrara}
\address{University of Padua, FISPPA Department}
\email{massimiliano.carrara@unipd.it}
\author{Michele Pra Baldi}
\address{University of Padua, FISPPA Department}
\email{michele.prabaldi@unipd.it}
\begin{document}

\maketitle

\begin{abstract}
A three-valued logic  $\logic{L}$ is subclassical when it is defined by a single matrix having the classical two-element matrix as a subreduct. In this case, the language of $\logic{L}$ can be expanded with special unary connectives, called \emph{external operators}. The resulting logic $\logic{L}^{e}$ is  the \emph{external version} of $\logic{L}$, a notion originally introduced by D. Bochvar in 1938 with respect to his weak Kleene logic. In this paper we study the semantic properties of the external version of a three-valued subclassical logic $\logic{L}$. We determine sufficient and necessary conditions to turn a model of $\logic{L}$ into a model of $\logic{L}^{e}$. Moreover, we establish some distinctive semantic properties of $\logic{L}^{e}$.

 \end{abstract}


\section{Introduction}
In the philosophy of logic debate departures from classical reasoning has been motivated by specific phenomena such as paradoxes, partial information and
vagueness. 
But  ``paradoxical sentences - as G. Priest argues - seem to be a fairly small proportion of the sentences we reason with. It would seem plausible to claim that in our day-to-day reasoning we (quite correctly) presuppose that we are not dealing with paradoxical claims'' (\cite[p. 235]{Priestfirst}). And when we are not dealing with these phenomena, we want to be able to reason classically; indeed classical logic seems to be perfectly in order as it is. Now, the problem is: how can we secure we reason ``classically'' when we are not dealing with these non-normal phenomena? Or, put differently: how can we recover specific sets of classical inferences within a non-classical propositional logic, such as in many-valued logics? A widely recognized approach is to enrich the language with new unary connectives, commonly referred to as normality, classicality, consistency, or determinedness operators.

This is precisely the direction originally pursued by N. da Costa, the founder of the well-established tradition of the Logics of Formal Inconsistency within the field of paraconsistent logics (we refer the reader to \cite{carnielli2007logics} for a comprehensive survey of this tradition). Subsequently, similar intuitions  led J. Marcos to the development of the Logics of Formal Undeterminedness \cite{marcos2005nearly}. More recently, D. Szmuc advanced this line of research by investigating  consistency and determinedness operators for a specific family of logics of variable inclusion (also known as infectious logics) \cite{SzmucMS}. 
However, the theoretical origins of this approach can be traced back to 1938, when the Russian logician D. Bochvar  expanded his three-valued logic with the so-called 
\emph{external} connectives \cite{Bochvar}.  Bochvar's three-valued logic $\logic{B}$ is formulated within the classical language $\langle\land,\lor,\neg\rangle$ (see Example \ref{example: subclassical logics}), and it was historically studied in connection with paradoxes.
Despite its modest success in dealing with paradoxes, this logic has been experiencing a resurgence in recent years. On the formal side, it has been explored within the framework of logics of variable inclusion \cite{Bonziobook}; in philosophy it has been  applied to the debate on subject-matter \cite{beall2016off, ferguson2017meaning}; and in epistemology it has been employed in belief revision theory \cite{CarraraEpistemic,carrara2024paraconsistent}. 
In this logic, assigning the non-classical truth value $\ant$ to a sentence $\varphi$ traditionally means that $\varphi$ is considered meaningless (or off-topic), which is equivalent to the evaluation of an atom appearing in $\varphi$ as $\ant$.
 Consequently, $\logic{B}$ is a theoremless logic, in the sense that every classical theorem (think of $\varphi\lor\neg\varphi$) admits a 
 counterexample by simply evaluating one of the variables appearing in $\varphi$ to $\ant$. In less formal terms, when some of the information contained in $\varphi \lor \neg \varphi$ is meaningless, the entire sentence also becomes meaningless. This distinctive feature is known as the \emph{contamination principle}.
Nonetheless, we may want our logical language to express the idea that ``$\varphi\lor\neg\varphi$ is true whenever it is meaningful''.
This is precisely what Bochvar achieved by enriching the language with new unary  connectives $\Delta_{0}$,$\Delta_{\ant}$,$\Delta_{1}$, called \emph{external operators}. Their intended interpretation is ``being meaningful and false'', ``being meaningless'', ``being meaningful and true'' respectively. The same operators were independently introduced, under different names, by  D'Ottaviano and da Costa in 1970 \cite{d1970probleme} to  formalize  the paraconsistent logic $\logic{J}_{3}$ (See Example \ref{example: subclassical logics}), further refined in collaboration with Epstein few years later \cite{Epstein1990}. 

These external operators allow us to express semantic features that the \emph{internal} language $\langle\land,\lor,\neg\rangle$ of $\logic{B}$ cannot capture.  Semantically, they  provide a yes/no answer, taking values among $\{0,1\}$, to whether their arguments are classical/non-classical or true/false.
 Expanding $\logic{B}$ with these external operators yields Bochvar external logic ($\logic{B}^{e}$). In this system, the statement ``$\varphi\lor\neg\varphi$ is true whenever it is meaningful" corresponds to the fact that $\Delta_{1}\varphi\lor\neg\Delta_{1}\varphi$ is a theorem of $\logic{B}^{e}$. 

 A closer look into investigations of Bochvar through the lens of modern algebraic logic shows that, when minimal conditions are met, the concept of \emph{external version} of a logic does not rely on specific properties of $\logic{B}$, but it can be  applied to a wide family of three-valued logics, which we call \emph{subclassical} (see Definition \ref{def: sublcassical logic}). 
 
Despite extensive logico-philosophical work on consistency, determinedness, and modal-like operators, a unified approach to external operators is still lacking. Likewise, the external version of subclassical logic has not been systematically studied using modern algebraic logic. This paper lays the groundwork for such a study.

A further motivation for this investigation is that some well-known logics, besides $\logic{B}^{e}$, can be understood as the external version of a subclassical logic. One example is three-valued \L ukasiewicz logic $\logic{\L}_{3}$, that can be equivalently 
formulated by replacing the implication $\to_{\logic{\L}}$ with the operator $\Delta_{1}$ in the three-element \L ukasiewicz chain (see Figure \ref{figure: algebras subclassical}).
In this algebra, $\Delta_{1}$ is known as the Monteiro-Baaz operator, traditionally understood as a modal operator of necessity (see \cite{cignoli1965boolean,cignoli2007algebras} and \cite{cintula2011handbook}). 
Thus, $\logic{\L}_{3}$ can be viewed as the external version of Strong Kleene logic $\logic{K}_{3}$, in symbols $\logic{K}_{3}^{e}=\logic{\L}_{3}$. 
Likewise, the paraconsistent logic $\logic{J}_{3}$ corresponds to the external version of the Logic of Paradox $\logic{LP}$.  Despite this, the connection between these logics has not yet been explored within the conceptual framework considered here.

The key takeaway is that several well-known logics, which might initially seem unrelated, actually converge as external versions of certain three-valued logics.
This suggests that the notion of external version of a logic might provide a unifying framework for studying a broad class of logics.

In this paper, we tackle two central questions related to the external version  $\logic{L}^{e}$ of a logic $\logic{L}$. First, in Section \ref{sec: semantic meaning}, we  determine necessary and sufficient semantic conditions to define an external operator over a model of $\logic{L}$ (Theorem \ref{thm: representation via conunclei}). For the special case of Bochvar logic and Strong Kleene logic, this question has been addressed in \cite{bonzio2023structure} and \cite{cignoli1965boolean}. 
 We prove that not every model of a subclassical logic supports the definability of an external operator, and we establish a useful criterion to detect when this is the case (Corollary \ref{cor: on separting}). 
Furthermore, in Section \ref{sec: properties external} we provide an overview of the logical properties that are recaptured in the step from a logic to its external version. This is the case of well-known and desirable properties of classical logic, such as algebraizability and the deduction theorem.

\section{Preliminaries} \label{sec: preliminaries}

For unexplained terminology the reader can refer to \cite{Font16} or \cite{Cz01}. In this paper we will mainly deal with (expansions of) two propositional languages: the classical language $\langle\land,\lor,\neg\rangle$ of type $\langle 2,2,1\rangle$ and  the language $\langle\land,\lor,\neg,\Delta_{1}\rangle$ of type $\langle 2,2,1,1\rangle$. We do not consider constant symbols in the language. Once a propositional language is fixed, by $Fm$ we denote the set of formulas built over it, and by $\alg{Fm}$ the absolutely free algebra with universe $Fm$. When necessary, we will write $\alg{Fm}_{\mathcal{L}}$ to empathize that we are referring to the language $\mathcal{L}$.  We will often omit the notational distinction between an algebra $\alg{A}$ and its underlying universe $A$, thus writing $a\in\alg{A}$ in place of $a\in A$. Whenever $\alg{A},\alg{B}$ are algebras of the same type (or \emph{similar})  and $h:\alg{A}\to\alg{B}$ is a homomorphism, we set $h[X]=\{b\in\alg{B}: b=h(a) \text{ for some } a\in X\}$, for $X\subseteq\alg{A}$. The dual notion applies to $h^{-1}[Y]$, where $Y\subseteq\alg{B}$. For a class of algebras $\class{K}$ we denote by $\AlgI,\AlgS,\AlgH,\AlgP,\AlgPS,\AlgPU (\class{K})$ the closure of $\class{K}$ under isomorphic copies, subalgebras, homomorphic images, products and subdirect products and ultraproducts.

 Given a propositional language, a logic $\logic{L}$ is a reflexive, monotone and transitive 
relation  $\logic{L}\subseteq\mathcal{P}(\alg{Fm})\times \alg{Fm}$, which is also invariant under substitutions, meaning that if $\langle \Gamma, 
\varphi\rangle\in\logic{L}$ then  $\langle h[\Gamma],h(\varphi)\rangle\in\logic{L}$ for every homomorphism $h:\alg{Fm}\to \alg{Fm}$. As usual, instead of using the infix notation, we denote a rule of the logic $\langle \Gamma,\varphi\rangle\in\logic{L}$  as $
\Gamma\vdash_{\logic{L}}\varphi$. We use $\psi\vdash\dashv_{\logic{L}}\varphi$ as a
 shorthand for $\varphi\vdash_{\logic{L}}\psi$ and $\psi\vdash_{\logic{L}}\varphi$.
  For $\varPhi\subseteq Fm$, by $\Gamma\vdash_{\logic{L}}\varPhi$ we understand $\Gamma\vdash_{\logic{L}}\varphi$, for each $\varphi\in\varPhi$. In this paper, we always assume that the logics we deal with are non trivial. Similar conventions apply when considering equations instead of formulas and the equational consequence $\vDash_{\class{K}}$ of a class of algebras $\class{K}$ instead of the consequence relation of a logic. A logic $\logic{L}$ is \emph{finitary} if $\Gamma\vdash_{\logic{L}}\varphi$ entails $\Delta\vdash_{\logic{L}}\varphi$, for some finite $\Delta\subseteq\Gamma$. In this paper, we will only deal with finitary logics, unless stated otherwise.
On the semantic side, our main tools will be provided by the theory of logical matrices. A matrix consists in a pair $\langle\alg{A}, F\rangle$ where $\alg{A}$ is an algebra and $F$ is a subset of its underlying set. Each matrix  $\langle\alg{A},F\rangle$ uniquely defines a logic $\vdash_{_{\langle\alg{A},F\rangle}}$ (in the same language as $\alg{A}$) as follows
\[\Gamma\vdash_{_{\langle\alg{A},F\rangle}}\varphi\iff \forall h:\alg{Fm}\to\alg{A},\text{ if }h[\Gamma]\in F \text{ then } h(\varphi)\in F,\]
for $\Gamma\cup\{\varphi\}\subseteq\alg{Fm}$.
 A matrix $\langle\alg{A},F\rangle$ is \emph{complete} or \emph{characteristic} for a logic $\logic{L}$ when 
 \[\Gamma\vdash_{\logic{L}}\varphi\iff\Gamma\vdash_{_{\langle\alg{A},F\rangle}}\varphi. \tag{Char}\label{tag char}\]
When the left-to-right implication of (\ref{tag char}) in the above display holds, we say that the matrix is  a \emph{model} of $\logic{L}$. Of course, this entails that $\alg{A}$ is an algebra of the same type of $\logic{L}$. In this case, $F$ is called a \emph{filter} of the logic $\logic{L}$. Intuitively, $F\subseteq\alg{A}$ is a $\logic{L}$-filter when it is closed under all the possibile interpretations of $\logic{L}$-rules over $\alg{A}$, and its elements are the so-called \emph{designated} values. A formula  $\varphi$ is a \emph{theorem} of $\logic{L}$ if $\vdash_{\logic{L}}\varphi$. A set of formulas $\Gamma$ is an \emph{antitheorem} of $\logic{L}$ if for each model $\langle\alg{A},F\rangle$ and $h:\alg{Fm}\to\alg{A}$ it holds $h(\gamma)\not\in F$ for some $\gamma\in\Gamma$. This is equivalent to saying that the formulas in $\Gamma$ cannot be jointly designated within a model of the logic. 
Each logic  $\logic{L}$ is canonically associated with a class of algebras, its algebraic counterpart $\Alg\logic{L}$. For instance, if we consider Classical logic ($\logic{CL}$), intuitionistic logic ($\logic{IL}$) and strong Kleene logic ($\logic{K}_{3}$), we obtain that $\Alg\logic{CL}$ is the class of Boolean algebras, $\Alg\logic{IL}$ is the class of Heyting algebras, and $\Alg\logic{K}_{3}$ is the class of Kleene lattices.  For any logic $\logic{L}$, the class $\Alg\logic{L}$ is closed under $\AlgI\AlgPS$, but need not to be closed under $\AlgS$. $\Alg\logic{L}$ is, intuitively, the class of algebras that support the ``interesting'' models of $\logic{L}$, the so-called reduced models. These are the models $\langle \alg{A},F\rangle$ such that  $\Tarski^{\alg{A}}F$ is the identity relation on $\alg{A}$, where the congruence $\Tarski^{\alg{A}}F$ on $\alg{A}$ is defined, for every $a,b\in \alg{A}$ as
\[\pair ab\in\Tarski^{\alg{A}}F\text{ if and only if }\varphi(a,\vec{c})\in G\iff\varphi(b,\vec{c})\in G,\]
for each formula $\varphi(x,\vec{z})$, each $\vec{c}\in \alg{A}$ and each $\logic{L}$-filter $G\supseteq F$ on $\alg{A}$. Crucially, $\alg{A}\in\Alg\logic{L}$ if and only if, for some $\logic{L}$-filter $F$ on $\alg{A}$, $\Tarski^{\alg{A}}F$ is the identity relation.

 We say that the matrix $\langle\alg{A},F\rangle$ is a submatrix of  $\langle\alg{B},G\rangle$ if $\alg{A}$ is a subalgebra of $\alg{B}$ (in symbols $\alg{A}\leq\alg{B}$) and $F=G\cap\alg{A}$. When $\mathcal{L}^{\prime}$ is a sublanguage of $\mathcal{L}$, given an $\mathcal{L}^{\prime}$-algebra $\alg{A}$ and an $\mathcal{L}$-algebra $\alg{B}$, we say that $\alg{A}$ is a \emph{subreduct} of $\alg{B}$ when $\alg{A}$ is a subalgebra of the algebra we obtain by restricting $\alg{B}$ to  the operations of $\mathcal{L}^{\prime}$. 
 When this condition holds and $\alg{A}$, $\alg{B}$ have the same universe, we say that $\alg{A}$ is the reduct of $\alg{B}$. The same terminology applies to matrices if we replace ``subalgebra'' with ``submatrix''. A class $\class{K}$ is a variety if it is closed under $\AlgH,\AlgS,\AlgP$, and it is a quasivariety if it is closed under $\AlgI\AlgS\AlgP\AlgPU$. When $\class{K}$ is a finite set of finite algebras, the smallest quasivariety containing $\class{K}$ is $\AlgI\AlgS\AlgP(\class{K})$. A quasivariety is axiomatized by a set of quasi-equations, namely universal first-order sentences in prenex normal form, where the non quantified part is an implication whose antecedent is a finite conjunction of equations and the consequent is a single equation.

 \section{ The external version of a three-valued logic}\label{Sec: external version}
   
We say that a logic  $\logic{L}$ is $n$-valued if $n$ is the smallest cardinality that a complete reduced model  for $\logic{L}$ can have. 
For instance, Classical logic $\logic{CL}$ is two-valued, Belnap-Dunn logic $\logic{FDE}$ is four-valued, while Intuitionistic logic is not $n$-valued, for any $n\in\omega$.
 Three-valued logics typically arise as an attempt to weakening classical logic and rejecting some of its distinctive inferences. This is done by expanding  the  classical (Boolean) domain $\{0,1\}$ with a third non-classical truth value ($\ant$), whose interpretation may vary depending on the specific context under consideration. For instance, in the Logic of Paradox ($\logic{LP}$) the rules of disjunctive syllogism ($\varphi,\neg\varphi\lor\psi\vdash\psi$) and of ex falso quodlibet ($\varphi\land\neg\varphi\vdash\psi$) admit a countermodel by evaluating $\varphi$ to $\ant$ and $\psi$ to $0$.
 
 From now on, it is convenient to denote the universe of a three-valued matrix by $\{0,\ant,1\}$, where $\{0,1\}$ is the subuniverse of the two elements Boolean algebra. This kind of logics  retain the classical language $\langle\land,\lor,\neg\rangle$ as a (not necessarily proper) fragment and, semantically, their operations behave classically when restricted to the Boolean universe.
Examples can be found in the realm of Kleene logics (e.g. $\logic{LP}, \logic{K}_{3}, \logic{B}, \logic{PWK}$), relevant logics (e.g. $\logic{RM}_{3}$, $\logic{S}_{3}$) and fuzzy logics ($\logic{\L}_{3},\logic{J}_{3}$), semantically defined in Example \ref{example: subclassical logics}.\footnote{We refer the interested reader to \cite{Avron1991,Ciucci,ciucci2014three,priest2008introduction} for a comprehensive treatment of the mathematical and philosophical aspects of three-valued logics.}
We condense these features within the definition of \emph{subclassical logic}\footnote{This expression is taken from \cite{ciuni2020normality}, where this name is used to identify similar logics.}.  From now on $\alg{B}_{2}$ denotes the two-element Boolean algebra with universe $\{0,1\}$.  
\begin{definition}\label{def: sublcassical logic}
Let $\logic{L}$ be a three-valued logic whose characteristic reduced matrix is $\langle\alg{A},F\rangle$ (which will be called \emph{defining matrix}). We say that $\logic{L}$ is a subclassical logic if 
\begin{enumerate}[(i)]
 \item $\alg{B}_{2}$ is a subreduct of $\alg{A}$ and $F\cap\alg{B}_{2}=\{1\}$;
\item $\alg{A}$ satisfies $\varphi\equals\neg\neg \varphi$.
\end{enumerate}
 
\end{definition}

 In general, no philosophical or arithmetical reading of the non-classical element $\ant$ should be assumed, as the appropriate interpretation only depends on its realization in a specific algebra. Notice that (ii) in Definition \ref{def: sublcassical logic} entails that $\ant=\neg\ant$  in every defining algebra of a subclassical logic. Thus, in every 
subclassical logic with defining matrix $\langle\alg{A},F\rangle$, the operation $\neg$ is uniquely determined  on $\alg{A}$ as $\neg 0=1$, $\neg 1=0$, $\neg\ant=\ant$.

\begin{figure}
 \begin{center}
  The algebra $\alg{SK}$ (type $\langle 2,2,1\rangle$)
\vspace{5 pt}

 \begin{tabular}{>{$}c<{$}|>{$}c<{$}>{$}c<{$}>{$}c<{$}}
   \land^{^{\alg{SK}}}& 0 & \ant & 1 \\[.2ex]
 \hline
       0 & 0& 0 & 0 \\
       \ant & 0 & \ant & \ant \\          
       1 & 0 & \ant & 1
\end{tabular}\hspace{10pt}
\begin{tabular}{>{$}c<{$}|>{$}c<{$}>{$}c<{$}>{$}c<{$}}
    \lor^{^{\alg{SK}}}& 0 & \ant & 1 \\[.2ex]
 \hline
       0 & 0 & \ant & 1 \\
       \ant & \ant & \ant & 1 \\          
       1 & 1 & 1 & 1
\end{tabular}

  \end{center}

 \begin{center}
 
 The algebra $\alg{WK}$ (type $\langle 2,2,1\rangle$)
  \vspace{5pt}

 \begin{tabular}{>{$}c<{$}|>{$}c<{$}>{$}c<{$}>{$}c<{$}}
   \land^{^{\alg{WK}}}& 0 & \ant & 1 \\[.2ex]
 \hline
       0 & 0 & \ant & 0 \\
       \ant & \ant & \ant & \ant \\          
       1 & 0 & \ant & 1
\end{tabular}\hspace{10pt}
\begin{tabular}{>{$}c<{$}|>{$}c<{$}>{$}c<{$}>{$}c<{$}}
    \lor^{^{\alg{WK}}}& 0 & \ant & 1 \\[.2ex]
 \hline
       0 & 0 & \ant & 1 \\
       \ant & \ant & \ant & \ant \\          
       1 & 1 & \ant & 1
\end{tabular}

 \end{center}
 
 \begin{center}
  The algebra $\alg{L}_{3}$ (type $\langle 2,2,2,1\rangle$)
\vspace{5pt}

 \begin{tabular}{>{$}c<{$}|>{$}c<{$}>{$}c<{$}>{$}c<{$}}
   \land^{^{\alg{L}_{3}}}& 0 & \ant & 1 \\[.2ex]
 \hline
       0 & 0& 0 & 0 \\
       \ant & 0 & \ant & \ant \\          
       1 & 0 & \ant & 1
\end{tabular}\hspace{10pt}
\begin{tabular}{>{$}c<{$}|>{$}c<{$}>{$}c<{$}>{$}c<{$}}
    \lor^{^{\alg{L}_{3}}}& 0 & \ant & 1 \\[.2ex]
 \hline
       0 & 0 & \ant & 1 \\
       \ant & \ant & \ant & 1 \\          
       1 & 1 & 1 & 1
\end{tabular}\hspace{10pt}
\begin{tabular}{>{$}c<{$}|>{$}c<{$}>{$}c<{$}>{$}c<{$}}
    \to^{^{\alg{L}_{3}}}& 0 & \ant & 1 \\[.2ex]
 \hline
       0 & 1 & 1 & 1 \\
       \ant & \ant & 1 & 1 \\          
       1 & 0 & \ant & 1
\end{tabular}

 \end{center}
 
 \begin{center}
  The algebra $\alg{S}_{3}$ (type $\langle 2,2,1\rangle$)
  \vspace{5pt}

 \begin{tabular}{>{$}c<{$}|>{$}c<{$}>{$}c<{$}>{$}c<{$}}
   \land^{^{\alg{S}_{3}}}& 0 & \ant & 1 \\[.2ex]
 \hline
       0 & 0 & 0 & 0 \\
       \ant & 0 & \ant & 1 \\          
       1 & 0 & 1 & 1
\end{tabular}\hspace{10pt}
\begin{tabular}{>{$}c<{$}|>{$}c<{$}>{$}c<{$}>{$}c<{$}}
    \lor^{^{\alg{S}_{3}}}& 0 & \ant & 1 \\[.2ex]
 \hline
       0 & 1 & 0 & 1 \\
       \ant & 0 & \ant & 1 \\          
       1 & 1 & 1 & 1
\end{tabular}

 \end{center}
 
 \begin{center}
  The algebra $\alg{Z}_{3}$ (type $\langle 2,2,2,1\rangle$)
  \vspace{5pt}

\begin{tabular}{>{$}c<{$}|>{$}c<{$}>{$}c<{$}>{$}c<{$}}
   \land^{^{\alg{Z}_{3}}}& 0 & \ant & 1 \\[.2ex]
 \hline
       0 & 0& 0 & 0 \\
       \ant & 0 & \ant & \ant \\          
       1 & 0 & \ant & 1
\end{tabular}\hspace{10pt}
\begin{tabular}{>{$}c<{$}|>{$}c<{$}>{$}c<{$}>{$}c<{$}}
    \lor^{^{\alg{Z}_{3}}}& 0 & \ant & 1 \\[.2ex]
 \hline
       0 & 0 & \ant & 1 \\
       \ant & \ant & \ant & 1 \\          
       1 & 1 & 1 & 1
\end{tabular}\hspace{10pt}
\begin{tabular}{>{$}c<{$}|>{$}c<{$}>{$}c<{$}>{$}c<{$}}
    \to^{^{\alg{Z}_{3}}}& 0 & \ant & 1 \\[.2ex]
 \hline
       0 & 1 & 1 & 1 \\
       \ant & 0 & \ant& 1 \\          
       1 & 0 & 0 & 1
\end{tabular}

 \end{center}
 \caption{Defining Algebras} \label{figure: algebras subclassical}
\end{figure}

  We now introduce a stock of  logics that will be instrumental for our purposes. These are paradigmatic representatives of subclassical logics (Definition \ref{def: sublcassical logic}) that the reader may keep in the background throughout the paper. This will provide a solid base to constantly test  the new concepts and the forthcoming results.
  
\begin{example}\label{example: subclassical logics} The following logics are subclassical (see Figure \ref{figure: algebras subclassical}).

\begin{itemize}
 \item $\logic{K}_{3}$, the logic defined by the matrix $\langle\alg{SK},\{1\}\rangle$. This is Strong Kleene logic, a well-known three-valued logic introduced in relation with  theories of truth and paradoxes. (See \cite{albuquerque2017algebraic} for a detailed formal study).

  \item  $\logic{LP}$, the logic defined by the matrix $\langle\alg{SK},\{1,\ant\}\rangle$. This is known as the Logic of Paradox, a famous paraconsistent logic introduced in \cite{asenjo1975logic} and further applied and developed by Priest \cite{Priestfirst}.
 \item $ \logic{B}$, the logic defined by the matrix $\langle\alg{WK},\{1\}\rangle$ This logic, known as Bochvar logic, has been introduced in \cite{Bochvar}. Recently, it has been studied in connection with logics of variable inclusion \cite{Bonziobook}  and their application to several topics in philosophy of logic \cite{CarraraEpistemic,SzmucEpistemic,Szmuctruth}.
 \item $\logic{PWK}$, the logic defined by the matrix $\langle\alg{WK},\{1,\ant\}\rangle$. Together with Bochvar logic, $\logic{PWK}$  belongs to the family of weak Kleene logics and it has been studied in the context of  a paraconsistency \cite{Hallden}. 
\item  $\logic{RM}_{3}$, the logic defined by the matrix $\langle\alg{Z}_{3},\{1,\ant\}\rangle$. This logic is the strongest extension of the logic of relevant mingle $\logic{RM}$, a well-known logic in the relevant family (see \cite{font1992note,Avron1991}). 
\item  $\logic{S}$, the logic defined by the matrix $\langle\alg{S}_{3},\{1,\ant\}\rangle$. Such system is equivalent to Soboci\'nski logic, a relatively well-known logic studied for its relation with linguistic fragments of the logic $\logic{RM}$ 
\cite{blok2004fragments,parks1972note,raftery2006equational}.
\item  $\logic{S}_{t}$, the logic defined by the matrix $\langle\alg{S}_{3},\{1\}\rangle$. To the best of our knowledge, this logic is not studied in the literature. 
\item $\logic{\L}_{3}$, the logic defined by the matrix $\langle\alg{L}_{3},\{1\}\rangle$. This system is widely recognized as \L ukasiewicz three-valued logic, and it probably is the most famous three-valued fuzzy logic (\cite{cignoli2007algebras, CiMuOt99}). 
\item $\logic{J}_{3}$, the logic defined by the matrix $\langle\alg{L}_{3},\{1,\ant\}\rangle$. This  logic has been developed by Da Costa and D'Ottaviano as the (weakly) paraconsistent counterpart of \L ukasiewicz logic. See \cite[Example 3.20]{Font16}.
\end{itemize}
 
\end{example}
 
The logics introduced in Example \ref{example: subclassical logics} do not form an exhaustive list of subclassical logics. Notice, moreover, that a subclassical logic is finitary, as it is complete with respect to a single finite matrix.
 Before proceeding, we clearly state an assumption that will be effective for the rest of the paper.
 \begin{assumption}\label{Assumption: Alg}
 We assume that, whenever $\logic{L}$ is a subclassical three-valued logic whose defining matrix is $\langle\alg{A},F\rangle$, then $\Alg\logic{L}=\AlgI\AlgS\AlgP(\alg{A})$.
\end{assumption}

All logics of Example \ref{example: subclassical logics}  satisfy Assumption 
 \ref{Assumption: Alg}, which always holds when $\logic{L}$ is algebraizable (see Theorem \ref{thm: czela on alg}). 
One can notice that the just mentioned three-valued subclassical logics can be partitioned into two 
 subclasses, depending of the set of designated values they preserve in the consequence relation. More precisely, given a three-valued algebra $\alg{A}$ having $\alg{B}_{2}$ as subreduct, the only two non-degenerate filters over $\alg{A}$ are $\{1\}$ and $\{1,\ant\}$. This fact is well-reflected by several logics discussed above, which are defined by matrices sharing the same algebraic reduct. The pairs $(\logic{K}_{3},
 \logic{LP})$, $(\logic{B},\logic{PWK})$, $(\logic{S}_{t},\logic{S})$, $(\logic{\L}_{3},\logic{J}_{3})$ are instances of this phenomenon with respect to the algebras $\alg{WK},\alg{SK},\alg{S}_{3},\alg{L}_{3}$.    
We are now ready to introduce the notion of external version of a subclassical logic.

\begin{definition}\label{def: external version 3-val logic}
Let $\logic{L}$ be a three-valued subclassical logic in the language $\mathcal{L}$, whose characteristic matrix is $\langle\alg{A},F\rangle$. The external version of $\logic{L}$, $\logic{L}^{e}$, is the logic defined by the matrix $\langle\alg{A}^{e},F\rangle$, where $\alg{A}^{e}$ is the algebra in the language $\mathcal{L}\cup\{\Delta_{1}\}$ with universe $A$ and whose operations are defined as:
$\circ^{\alg{A}^{e}}=\circ^{\alg{A}}$, for $\circ\in\mathcal{L}$ and
  \[
\Delta_{1}^{\alg{A}^{e}}(a)= \begin{cases}
1 \text{ if }  a=1\\
0 \text{ otherwise}.
\end{cases}
\]
\end{definition}

With these notions at play it is possible to see that $\logic{\L}_3$ is the external version of $\logic{K}_3$, $\logic{J}_3$ is the external version of $\logic{LP}$ and, of course, $\logic{B}^e$ (see \cite{bonzio2024bochvar}) is the external version of $\logic{B}$.

 It now seems natural to investigate the relationship between a subclassical logic $\logic{L}$ and its external version $\logic{L}^e$.  With this goal in mind, we proceed to we tackle two central questions. 
\begin{itemize}
\item How are the matrix models of $\logic{L}$ and of $\logic{L}^e$  related?
\item Which semantic properties are preserved or gained in the step from $\logic{L}$ to $\logic{L}^e$?
\end{itemize}

\section{The semantic meaning of external operators}\label{sec: semantic meaning}

The goal of this section is to study the semantic effect that external operators  have on the class of models naturally associated with a subclassical logic.  One might expect that, given a model of a subclassical logic, it is always possible to define an appropriate external operation $\Delta_{1}$, in order to obtain a model if its external version. However, this is not the case. The definability of such an operator forces different, non-trivial semantic conditions. More precisely, the central question to answer is: when can the algebraic reduct of a reduced model of  a subclassical logic $\logic{L}$ be expanded to a reduced model of its external version $\logic{L}^{e}$?
In the special case of the logic $\logic{K}_{3}$, a solution was given in the pioneering work of \cite{cignoli1965boolean}. In the case of Bochvar logic, this result has been recently established, with different methods, in \cite{bonzio2023structure,bonzio2024bochvar}. 
In both cases,  the conceptual framework employed rely on very peculiar properties of the respective algebras and of the associated logics. Here, as we shall se, we will provide a general solution to the problem which is based on the theory of \emph{nuclei}. As a result, it turns out that  the algebraic reduct of a reduced model $\alg{B}$ of $\logic{L}$ can be turned into a reduced model of $\logic{L}^{e}$ is possible if only if a specific conucleus is definable on $\alg{B}$.

As in the rest of the paper, let us denote by $\alg{A}$ the algebra that supports the reduced matrix $\langle \alg{A},F\rangle$ of a three-valued subclassical logic $\logic{L}$.   Also recall that, as stated in Assumption \ref{Assumption: Alg}, we require $\Alg\logic{L}=\AlgI\AlgS\AlgP(\alg{A})$ (notice that all the subclassical logics discussed so far fulfill this assumption). Now, if we consider $\alg{B}\in\AlgS\AlgP(\alg{A})$, namely $\alg{B}\leq\alg{A}^{I}$, for some set of indexes $I$,  each $b\in \alg{B}$ is a tuple $(\pi_{i}b)_{i\in I}$ such that $\pi_{i} b\in\{0,\ant,1\}$  for every $i\in I$, where $\pi_{i}b$ denotes the $i$-th entry of $b$. In plain words, each element of $\alg{B}$ is a tuple whose entries are values among the underlying set $\{0,\ant,1\}$ of $\alg{A}$. We now   introduce the notion of \emph{Boolean skeleton} for an algebra $\alg{B}\in\AlgS\AlgP(\alg{A})$, as it will play a central role in this section.
\begin{definition}\label{def: boolean skeleton 2}
Let $\alg{C}\leq\alg{A}^{I}$.  The Boolean Skeleton of $\alg{C}$, $\mathfrak{B}^{\alg{C}}$  for short, is defined as 
\[\mathfrak{B}^{\alg{C}}\assign\{c\in \alg{C}: \pi_{i}c\in\{0,1\}, \text{ for all }i\in I\}.\]
\end{definition}
One can see that, if $\mathfrak{B}^{\alg{C}}\neq\emptyset$ (the case $\mathfrak{B}^{\alg{C}}=\emptyset$ may arise, as we do not assume constant operations in our language), then it forms a subalgebra of $\alg{C}$. Moreover, for each $c\in\mathfrak{B}^{\alg{C}}$, $\pi_{i}(c\land \neg c)=0$, $\pi_{i}(c\lor\neg c)=1$ for each $i\in I$. In other words, the elements of the Boolean skeleton of $\alg{C}\leq\alg{A}^{I}$ are tuples whose values are among $\{0,1\}$ and they form a Boolean subalgebra of $\alg{C}$.
Since $\Alg\logic{L}$ is a quasivariety, this notion can be naturally extended to an arbitrary $\alg{B}\in\Alg\logic{L}$ by taking isomorphic pre-images, as specified by the next definition. 
\begin{definition}\label{def: boolean skeleton}
  Let $\alg{B}\in\Alg\logic{L}$, namely $\alg{B}\iso\alg{C}\leq\alg{A}^{I}$, where $h$ is an isomorphism $\alg{B}\to\alg{C}$. The Boolean skeleton of $\alg{B}$ determined by $h$ is $\mathfrak{B}^{\alg{B}}\assign h^{-1}[\mathfrak{B}^{\alg{C}}]$.
 
\end{definition}

 Notice that in Definition \ref{def: boolean skeleton} the Boolean skeleton of $\alg{B}$ is relative to a fixed isomorphism $h:\alg{B}\to \alg{C}$. Moreover, this definition has a purely technical role, and  it does not introduce any  real conceptual gain with respect to Definition \ref{def: boolean skeleton 2}. Indeed, up to isomorphism, all the algebras belonging to $\Alg\logic{L}$ can be thought as subalgebras of $\alg{A}^{I}$.

Now, let us define a partial order $\preceq$ on $\{0,\ant, 1\}$ as $0\preceq\ant\preceq 1$. For $a,b\in\alg{C}\leq\alg{A}^{I}$, set $a\preceq_{_{\alg{C}}} b$ if and only if $\pi_{i}a\preceq\pi_{i}b$, for each $i\in I$. When  $h:\alg{B}\to\alg{C}$ is an isomorphism and $c,d\in\alg{B}$, we set  $c\preceq_{_{\alg{B}}} d$ if and only if $h(c)\preceq_{_{\alg{C}}}h(d)$. It is trivial to check that $\preceq_{_{\alg{B}}}, \preceq_{_{\alg{C}}}$ are partial orders. It is clear that this order can be equally defined on algebras belonging to $\AlgI\AlgS\AlgP(\alg{A}^{e})$. 
The key tool which will allow to transparently describe the semantic meaning of external operators is the concept of $\emph{conucleus}$. The notion of conucleus found useful applications in logic, and  historically plays a central role in the study of varieties of residuated lattices, namely algebraic varieties arising as the equivalent algebraic semantics of substructural logics \cite{GaJiKoOn07}, \cite{galatos2005generalized}. 
For our purposes, a convenient formulation of the definition of conucleus is the following.

\begin{definition}\label{def: conucleus general}
 Let $\alg{B}\iso\alg{C}\leq\alg{A}^{I}$, and $h$ be an isomorphism $\alg{B}\to\alg{C}$.
 A  conucleus on $\alg{B}$ is a map $\sigma :\alg{B}\to{\alg{B}}$ which is monotone with respect to $\preceq_{_{\alg{B}}}$ and such that:
\begin{enumerate}[(i)]
 \item$\sigma (a)\preceq_{_{\alg{B}}} a$;
 \item $\sigma(a)\land\sigma(b)\preceq_{_{\alg{B}}} \sigma (a\land b)$;
 \item $\sigma (\sigma (a))=\sigma (a)$,
\end{enumerate}
for every $a,b\in\alg{B}$.

We will refer to $\sigma[\alg{B}]=\{a\in\alg{B}: \sigma(b)=a \text{ for some } a\in\alg{B}\}$ as the conuclear image of $\sigma$. As we shall see, we will be interested in special kinds of conuclei, whose features are defined below.
\end{definition}
\begin{definition}\label{def: conucleus}
 Let  $\alg{B}\iso\alg{C}\leq\alg{A}^{I}$, and $h$ be an isomorphism $\alg{B}\to\alg{C}$. A  conucleus $\sigma$ on $\alg{B}$ is called:
 
\begin{itemize}
 \item \emph{Boolean} if its conuclear image is $\mathfrak{B}^{\alg{B}}$;
 \item \emph{complete} if, for each  $a\in\alg{B}$,  $\bigvee^{\alg{A}^{I}}\{b\in\mathfrak{B}^{\alg{A}^{I}}: b\preceq_{_{\alg{A}^{I}}}h(a)\}\in h(\sigma[\alg{B}])$.
\end{itemize}
\end{definition}
We will say that $\alg{B}\in\Alg\logic{L}$ \emph{admits} a conucleus when there exists $\alg{C}\leq\alg{A}^{I}$ and an isomorphism $h:\alg{B}\to\alg{C}$ such that 
the conditions of Definition \ref{def: conucleus general} apply, and similarly for the specifications introduced in Definition \ref{def: conucleus}. 
Notice that if $\sigma$ is a Boolean conucleus on $\alg{B}$, then $\mathfrak{B}^{\alg{B}}\neq\emptyset$,  $\sigma[\alg{B}]=\mathfrak{B}^{\alg{B}}$ and $a\in\mathfrak{B}^{\alg{B}}$ entails $\sigma(a)=a$. Moreover, the condition that  $\sigma$ is complete entails that the join $\bigvee\{b\in\mathfrak{B}^{\alg{A}^{I}}: b\preceq_{_{\alg{A}^{I}}}h(a)\}$ exists in $\alg{A}^{I}$, for each $a\in\alg{B}$.

When needed, we will write $\sigma_{_{\alg{B}}}$ to emphasize the fact that this is conucleus on $\alg{B}$ and we will sometimes omit parenthesis in order to simplify the notation. If we replace $\alg{A}^{I}$ with $(\alg{A}^{e})^{I}$ and $\logic{L}$ with $\logic{L}^{e}$ in Definitions \ref{def: conucleus general},\ref{def: conucleus}, they obviously  extend to algebras belonging to $\Alg\logic{L}^{e}$.
The following fact is an immediate consequence of the definitions and the fact that the desired property of being a Boolean  conucleus is preserved by isomorphic pre images. 

\begin{fact}\label{fact: boolean elements invariant conuncleus}
Let $\alg{B}$ be the $\Delta_{1}$-free reduct of an $\Alg\logic{L}^{e}$-algebra $\alg{B}^{\Delta}$ and let  $h:\alg{B}^{\Delta}\to\alg{C}\leq(\alg{A}^{e})^{I}$ be an isomorphism. Suppose $\sigma_{_{\alg{C}}}$ is a  Boolean conucleus on $\alg{C}$.   The map $\sigma_{_{\alg{B}}}$ defined for every $a\in \alg{B}$ as $\sigma_{_{\alg{B}}}(a)=h^{-1}\sigma_{_{\alg{C}}}(h(a))$ is a Boolean conucleus on $\alg{B}$.
\end{fact}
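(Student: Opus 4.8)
The plan is to verify directly that the transported map $\sigma_{_{\alg{B}}}\assign h^{-1}\circ\sigma_{_{\alg{C}}}\circ h$ satisfies the defining clauses of a conucleus (Definition \ref{def: conucleus general}) relative to the isomorphism $h$, and then that its conuclear image is $\mathfrak{B}^{\alg{B}}$, i.e.\ that it is Boolean. Since $h$ is an isomorphism of $\mathcal{L}$-algebras (being an $\mathcal{L}\cup\{\Delta_1\}$-isomorphism, it is in particular an $\mathcal{L}$-isomorphism between the $\Delta_1$-free reducts), and $\preceq_{_{\alg{B}}}$ is \emph{by definition} the order induced by pulling back $\preceq_{_{\alg{C}}}$ along $h$, every clause will follow by a purely formal transport argument.

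In detail: first I would note that $\sigma_{_{\alg{B}}}$ is well-defined as a map $\alg{B}\to\alg{B}$, since $h$ is a bijection onto $\alg{C}$ and $\sigma_{_{\alg{C}}}$ maps $\alg{C}$ into $\alg{C}$. Monotonicity with respect to $\preceq_{_{\alg{B}}}$ is immediate: if $a\preceq_{_{\alg{B}}}b$ then $h(a)\preceq_{_{\alg{C}}}h(b)$ by definition of $\preceq_{_{\alg{B}}}$, hence $\sigma_{_{\alg{C}}}(h(a))\preceq_{_{\alg{C}}}\sigma_{_{\alg{C}}}(h(b))$ by monotonicity of $\sigma_{_{\alg{C}}}$, hence $h^{-1}\sigma_{_{\alg{C}}}h(a)\preceq_{_{\alg{B}}}h^{-1}\sigma_{_{\alg{C}}}h(b)$, again by definition. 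For clause (i): $\sigma_{_{\alg{C}}}(h(a))\preceq_{_{\alg{C}}}h(a)$ gives $h^{-1}\sigma_{_{\alg{C}}}h(a)\preceq_{_{\alg{B}}}a$. For clause (iii): $\sigma_{_{\alg{B}}}\sigma_{_{\alg{B}}}(a)=h^{-1}\sigma_{_{\alg{C}}}h\,h^{-1}\sigma_{_{\alg{C}}}h(a)=h^{-1}\sigma_{_{\alg{C}}}\sigma_{_{\alg{C}}}h(a)=h^{-1}\sigma_{_{\alg{C}}}h(a)=\sigma_{_{\alg{B}}}(a)$, using idempotency of $\sigma_{_{\alg{C}}}$. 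For clause (ii), one additionally uses that $h$ preserves $\land$: $\sigma_{_{\alg{B}}}(a)\land\sigma_{_{\alg{B}}}(b)=h^{-1}(\sigma_{_{\alg{C}}}h(a))\land h^{-1}(\sigma_{_{\alg{C}}}h(b))=h^{-1}(\sigma_{_{\alg{C}}}h(a)\land\sigma_{_{\alg{C}}}h(b))\preceq_{_{\alg{B}}}h^{-1}\sigma_{_{\alg{C}}}(h(a)\land h(b))=h^{-1}\sigma_{_{\alg{C}}}h(a\land b)=\sigma_{_{\alg{B}}}(a\land b)$, where the middle inequality transports clause (ii) for $\sigma_{_{\alg{C}}}$ along $h^{-1}$ using monotonicity of $\preceq_{_{\alg{B}}}$ under $h$.

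It remains to check that $\sigma_{_{\alg{B}}}$ is Boolean, i.e.\ $\sigma_{_{\alg{B}}}[\alg{B}]=\mathfrak{B}^{\alg{B}}$. By hypothesis $\sigma_{_{\alg{C}}}[\alg{C}]=\mathfrak{B}^{\alg{C}}$, so $\sigma_{_{\alg{B}}}[\alg{B}]=h^{-1}[\sigma_{_{\alg{C}}}[h[\alg{B}]]]=h^{-1}[\sigma_{_{\alg{C}}}[\alg{C}]]=h^{-1}[\mathfrak{B}^{\alg{C}}]$, which is exactly $\mathfrak{B}^{\alg{B}}$ by Definition \ref{def: boolean skeleton} (the Boolean skeleton of $\alg{B}$ determined by $h$). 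This completes the argument. I expect no genuine obstacle here: the statement is essentially the assertion that ``being a Boolean conucleus'' is preserved under transport of structure along an isomorphism, and the only things to be careful about are (a) that $h$, although an isomorphism of the richer language, restricts to an $\mathcal{L}$-isomorphism of reducts so that $\land$ is preserved, and (b) that $\preceq_{_{\alg{B}}}$ and $\mathfrak{B}^{\alg{B}}$ are both, by their definitions, the pullbacks along $h$ of the corresponding notions on $\alg{C}$ — so the ``preserved by isomorphic pre-images'' remark in the text is doing the real work, and the proof is just unwinding definitions.
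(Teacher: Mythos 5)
Your proof is correct and takes the same route as the paper, which offers no written proof at all but simply declares the Fact ``an immediate consequence of the definitions and the fact that being a Boolean conucleus is preserved by isomorphic pre-images''; you have merely unwound that transport-of-structure argument clause by clause. The two points you flag as the only delicate ones --- that $h$ restricts to an $\mathcal{L}$-isomorphism of the $\Delta_{1}$-free reducts, and that $\preceq_{_{\alg{B}}}$ and $\mathfrak{B}^{\alg{B}}$ are by definition the pullbacks along $h$ of the corresponding notions on $\alg{C}$ --- are exactly the content of the paper's remark, so nothing is missing.
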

The following lemma characterizes how the operation $\Delta_{1}$ is computed in a power of $\alg{A}^{e}$.
\begin{lemma}\label{lemma: delta is join of booleans}
Let $a\in(\alg{A}^{e})^{I}$. Then $\Delta_{1}a=\bigvee^{(\alg{A}^{e})^{I}}\{b\in\mathfrak{B}^{(\alg{A}^{e})^{I}}: b\preceq_{_{(\alg{A}^{e})^{I}}}a\}.$
\end{lemma}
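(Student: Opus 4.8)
The claim is purely about the algebra $(\alg{A}^{e})^{I}$, so the plan is to argue coordinatewise. Fix $a \in (\alg{A}^{e})^{I}$ and write $S_a \assign \{b \in \mathfrak{B}^{(\alg{A}^{e})^{I}} : b \preceq_{_{(\alg{A}^{e})^{I}}} a\}$. First I would observe that $S_a$ is nonempty: the tuple $b^{0}$ defined by $\pi_i b^{0} = 0$ for all $i$ lies in $\mathfrak{B}^{(\alg{A}^{e})^{I}}$ (since $\{0,1\}$ is the Boolean subreduct and $0$ is in it) and satisfies $0 = \pi_i b^{0} \preceq \pi_i a$ for every $i$, because $0$ is the bottom of $\preceq$ on $\{0,\ant,1\}$. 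So the set over which the join is taken is always nonempty, which also rules out degenerate issues with the supremum.

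\textbf{Identifying the candidate supremum.} Next I would exhibit the natural candidate for $\bigvee S_a$ and check it works. Define $d \in (\alg{A}^{e})^{I}$ by $\pi_i d = 1$ if $\pi_i a = 1$, and $\pi_i d = 0$ otherwise — that is, $\pi_i d = \Delta_{1}^{\alg{A}^{e}}(\pi_i a)$, so that $d = \Delta_1 a$ by the coordinatewise definition of operations in a power. Clearly $\pi_i d \in \{0,1\}$ for all $i$, so $d \in \mathfrak{B}^{(\alg{A}^{e})^{I}}$, and $\pi_i d \preceq \pi_i a$ in every coordinate (if $\pi_i a = 1$ then $\pi_i d = 1 \preceq 1$; otherwise $\pi_i d = 0$, the $\preceq$-bottom), so $d \in S_a$. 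Thus $d$ is an \emph{upper bound candidate from inside} $S_a$; it remains to see it dominates every element of $S_a$ and that it is least among upper bounds.

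\textbf{Verifying the join.} For the upper-bound property, take any $b \in S_a$ and any $i \in I$. If $\pi_i d = 1$ we trivially have $\pi_i b \preceq 1 = \pi_i d$. If $\pi_i d = 0$ then $\pi_i a \in \{0, \ant\}$ by construction of $d$; since $b \in S_a$ forces $\pi_i b \preceq \pi_i a \preceq \ant$, and since $\pi_i b \in \{0,1\}$ (as $b$ is in the Boolean skeleton), the only possibility compatible with $\pi_i b \preceq \ant$ is $\pi_i b = 0 = \pi_i d$. Hence $b \preceq_{_{(\alg{A}^{e})^{I}}} d$, so $d$ is an upper bound of $S_a$ in $\preceq_{_{(\alg{A}^{e})^{I}}}$. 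Since $d$ itself belongs to $S_a$, it is automatically the least upper bound. Therefore $\bigvee^{(\alg{A}^{e})^{I}} S_a = d = \Delta_1 a$, which is exactly the statement.

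\textbf{Main obstacle.} There is no deep obstacle here; the subtlety is mostly bookkeeping: one must be careful that the supremum in the statement is taken in the partial order $\preceq$, not in the lattice order induced by $\land, \lor$, and that the relevant bottom element $0$ is indeed available inside the Boolean skeleton even though we do not assume constants in the language (this is guaranteed because $\mathfrak{B}^{(\alg{A}^{e})^{I}}$ as described contains the all-zero tuple whenever it is nonempty, and here it is nonempty since it contains $\Delta_1 a$). The only point requiring a small argument is the case analysis showing $\pi_i b \preceq \ant$ together with $\pi_i b \in \{0,1\}$ forces $\pi_i b = 0$, which is immediate from the definition $0 \preceq \ant \preceq 1$.
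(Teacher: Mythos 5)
Your proof is correct and follows essentially the same route as the paper's: both identify $\Delta_1 a$ as an element of the set $S_a$ (computed coordinatewise), verify it is an upper bound by a case analysis on the coordinates, and conclude it is the join because a set's maximum is its supremum. Your write-up is, if anything, slightly more careful at the upper-bound step (handling $\pi_i a \in \{0,\ant\}$ explicitly), but there is no substantive difference in approach.
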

\begin{proof}
Consider $a\in(\alg{A}^{e})^{I}$. In order to simplify the notation, let us write $\preceq$ as a shorthand for $\preceq_{_{(\alg{A}^{e})^{I}}}$ and set
\[X=\{b\in\mathfrak{B}^{(\alg{A}^{e})^{I}}: b\preceq a\}.\]
Notice that $X$ is clearly non-empty, as $\Delta_{1} a\in\mathfrak{B}^{(\alg{A}^{e})^{I}}$ and $\Delta_{1} a\preceq a$. This is true because  the operation $\Delta_{1}$ is computed component-wise  over a power of $\alg{A}^{e}$. Suppose, towards a contradiction, that  $b\in X$ and $b\npreceq\Delta_{1} a$. Thus $\pi_{i}b=1$ and $\pi_{i}\Delta_{1} a=0$ for some $i\in I$, which entails $\pi_{i}a=0$.   However,  $b\preceq a$ implies $\pi_{i}a=1$, a contradiction. This proves that $\Delta_{1} a$ is an upper bound of $X$. The fact that $\Delta_{1} a=\bigvee X$ follows from the fact that if $b\in X$ and $b\preceq \Delta_{1} a$ then $b=\Delta_{1} a$. 
\end{proof}

In the view of the above lemma, an equivalent description of how the value of the $\Delta_{1}$ operator  is computed is the following:
\[
\Delta_{1}a=\text{max}\{b\in\mathfrak{B}^{(\alg{A}^{e})^{I}}: b\preceq_{_{(\alg{A}^{e})^{I}}}a\}.\]

We are now ready to state the main result of this section. 
\begin{theorem}\label{thm: representation via conunclei}
Let $\alg{B}\in\Alg\logic{L}$. The following are equivalent:
\begin{enumerate}[(i)]
\item $\alg{B}$ can be equipped with the structure of an $\Alg\logic{L}^{e}$-algebra;
\item $\alg{B}$ admits a complete Boolean conucleus.
\end{enumerate}
 
\end{theorem}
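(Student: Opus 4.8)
The plan is to establish both implications through Lemma~\ref{lemma: delta is join of booleans}, working with a fixed representation $h\colon\alg{B}\to\alg{C}\leq\alg{A}^{I}$ and the induced order $\preceq_{\alg{C}}$, and using throughout that (up to isomorphism) $\Alg\logic{L}^{e}=\AlgI\AlgS\AlgP(\alg{A}^{e})$, so that the ``subalgebra of a power'' picture is available for $\logic{L}^{e}$ as well, and that $\Delta_{1}$ on any power is computed coordinatewise with values in $\{0,1\}$. I would record at the outset that the $\preceq$-joins appearing in Lemma~\ref{lemma: delta is join of booleans} and in the definition of completeness are computed in the common poset underlying $\alg{A}^{I}$ and $(\alg{A}^{e})^{I}$, so ``$\bigvee^{\alg{A}^{I}}$'' and ``$\bigvee^{(\alg{A}^{e})^{I}}$'' name the same element whenever either exists.

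For (i)$\Rightarrow$(ii) I would take an isomorphism $g\colon\langle\alg{B},\Delta_{1}^{\alg{B}}\rangle\to\alg{D}\leq(\alg{A}^{e})^{I}$ witnessing $\langle\alg{B},\Delta_{1}^{\alg{B}}\rangle\in\Alg\logic{L}^{e}$, pass to $\Delta_{1}$-free reducts to obtain an isomorphism $h\colon\alg{B}\to\alg{C}$ with $\alg{C}\leq\alg{A}^{I}$, and show that $\sigma:=\Delta_{1}^{\alg{B}}$ is a complete Boolean conucleus on $\alg{B}$ relative to $h$. Pushed along $h$, $\sigma$ becomes the coordinatewise map $\Delta_{1}^{(\alg{A}^{e})^{I}}\restriction\alg{C}$, and I would verify each requirement coordinate by coordinate: monotonicity and axioms~(i) and~(iii) because $0$ is $\preceq$-least, $1$ is $\preceq$-greatest, and $\Delta_{1}$ is idempotent; axiom~(ii) because $\{0,1\}$ is a Boolean subreduct, so $1\land^{\alg{A}}1=1$ and a coordinate at which $\Delta_{1}(a)\land\Delta_{1}(b)$ is $1$ is one at which $\Delta_{1}(a\land b)$ is $1$; Boolean-ness because $\Delta_{1}$ outputs $\{0,1\}$-tuples and fixes $\{0,1\}$-tuples, so the conuclear image is exactly $\mathfrak{B}^{\alg{C}}$; and completeness is immediate from Lemma~\ref{lemma: delta is join of booleans}, which identifies $\bigvee^{\alg{A}^{I}}\{b\in\mathfrak{B}^{\alg{A}^{I}}:b\preceq c\}$ with $\Delta_{1}^{(\alg{A}^{e})^{I}}(c)=\sigma(c)\in\sigma[\alg{C}]$.

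For (ii)$\Rightarrow$(i), the substantive direction, I would fix $h\colon\alg{B}\to\alg{C}\leq\alg{A}^{I}$ together with a complete Boolean conucleus on $\alg{B}$, transport it along $h$ to a complete Boolean conucleus $\tau$ on $\alg{C}$, and prove the identity
\[
\tau(c)=\bigvee^{\alg{A}^{I}}\{b\in\mathfrak{B}^{\alg{A}^{I}}:b\preceq_{\alg{A}^{I}}c\}\qquad\text{for every }c\in\alg{C}.
\]
For ``$\preceq$'' I would note that, by Boolean-ness, $\tau(c)\in\mathfrak{B}^{\alg{C}}\subseteq\mathfrak{B}^{\alg{A}^{I}}$, and by axiom~(i), $\tau(c)\preceq c$, so $\tau(c)$ is one of the elements being joined. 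For the converse, writing $\beta$ for that join, completeness gives $\beta\in\tau[\alg{C}]=\mathfrak{B}^{\alg{C}}$, so $\beta$ is a Boolean element of $\alg{C}$ with $\beta\preceq_{\alg{C}}c$; since a Boolean conucleus fixes Boolean elements and $\tau$ is monotone, $\beta=\tau(\beta)\preceq_{\alg{C}}\tau(c)$. By antisymmetry the identity holds, and by Lemma~\ref{lemma: delta is join of booleans} its right-hand side is $\Delta_{1}^{(\alg{A}^{e})^{I}}(c)$; hence $\tau=\Delta_{1}^{(\alg{A}^{e})^{I}}\restriction\alg{C}$, so $\alg{C}$ is closed under $\Delta_{1}^{(\alg{A}^{e})^{I}}$ and, equipped with it, is a subalgebra of $(\alg{A}^{e})^{I}$. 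Pulling back along $h$ then equips $\alg{B}$ with a $\Delta_{1}$-operation for which $\langle\alg{B},\Delta_{1}^{\alg{B}}\rangle\in\AlgI\AlgS\AlgP(\alg{A}^{e})=\Alg\logic{L}^{e}$, with $\Delta_{1}$-free reduct $\alg{B}$.

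I expect the main obstacle to be the ``$\succeq$'' inequality in the displayed identity of (ii)$\Rightarrow$(i): this is where \emph{completeness} is essential, since without it $\tau(c)$ is merely \emph{some} Boolean element below $c$ rather than the greatest one, and hence need not agree with $\Delta_{1}$. The combination ``completeness $+$ Boolean elements are $\tau$-fixed $+$ monotonicity'' is precisely what forces $\tau$ to coincide with the coordinatewise $\Delta_{1}$; the remainder is bookkeeping around the fixed representation $h$ and the coordinatewise description of $\Delta_{1}$ on powers.
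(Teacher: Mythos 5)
Your proposal is correct and follows essentially the same route as the paper's proof: both directions reduce to the coordinatewise description of $\Delta_{1}$ on powers via Lemma~\ref{lemma: delta is join of booleans}, with (i)$\Rightarrow$(ii) verified by checking the conucleus axioms for $\Delta_{1}$ on a subalgebra of $(\alg{A}^{e})^{I}$ and (ii)$\Rightarrow$(i) obtained by squeezing $\tau(c)$ against $\Delta_{1}(c)$ using completeness, the fact that Boolean elements are fixed, and monotonicity. The only cosmetic difference is that the paper checks the meet axiom via a quasi-equation valid in $\alg{A}^{e}$ rather than coordinatewise, which does not affect the substance.
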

\begin{proof}

 (i)$\Rightarrow$(ii).
 Let $\alg{B}\in\Alg\logic{L}$ and suppose it is possible to equip $\alg{B}$ with a unary operation $\Delta_{1}$, i.e. we can expand  $\alg{B}$ with a unary operation $\Delta_{1}$, so that the expanded algebra $\alg{B}^{\Delta}\in\Alg\logic{L}^{e}$. This means that $\alg{B}^{\Delta}$ is isomorphic via a map $h$ to some $\alg{C}\leq(\alg{A}^{e})^{I}$. Thus, the  elements of $\alg{C}$ are tuples whose entries are among $\{0,\ant, 1\}$, i.e., for $a\in\alg{C}$,  $\pi_{i}c\in\{0,\ant, 1\}$ for every $i\in I$.   Since $\alg{C}\leq(\alg{A}^{e})^{I}$, $\Delta_{1}^{\alg{C}}$ is map $\alg{C}\to\mathfrak{B}^{\alg{C}}$. This is true because, for any element $a\in\alg{C}$, $\Delta_{1}^{\alg{C}}a=(\Delta^{(\alg{A}^{e})}\pi_{i}a)_{i\in I}$ and clearly $\Delta_{1}^{(\alg{A}^{e})}\pi_{i}a\in\{0,1\}$ for every $i\in I$. Thus, $\Delta_{1}^{\alg{C}}$ is a map $\alg{C}\to\mathfrak{B}^{\alg{B}}$. We now show that $\Delta_{1}^{\alg{C}}$ is a Boolean conucleus on $\alg{C}$. To this end, recall the partial order $0\preceq\ant\preceq 1$ and its realization $\preceq_{_{\alg{C}}}$  over the algebra $\alg{C}$ defined, for every $a,b\in\alg{C}$ as $a\preceq_{_{\alg{C}}}b$ if and only if $\pi_{i}a\preceq \pi_{i}b$ for every $i\in I$. If we consider $a\in\alg{C}$, it holds that
  \[
\pi_{i}\Delta_{1}^{\alg{C}}a= \begin{cases}
1 \text{ if }  \pi_{i}a=1\\
0 \text{ otherwise},
\end{cases}\tag{Def $\Delta_{1}^{\alg{C}}$}\label{tag delta1}
\]
 thus $\Delta_{1}^{\alg{C}}a\preceq_{_{\alg{C}}}a$. Moreover, if $a\in\mathfrak{B}^{\alg{C}}$, clearly $\Delta_{1}^{\alg{C}}a=a$.
 
 Let  now $a\preceq_{_{\alg{C}}} b$.  By the above display (\ref{tag delta1}) it is immediate to check that that if $\pi_{i}a\preceq_{_{\alg{C}}}\pi_{i}b$ then $\pi_{i}\Delta_{1}^{\alg{C}}a\preceq_{_{\alg{C}}}\pi_{i}\Delta_{1}^{c}b$, so $\Delta_{1}^{\alg{C}}$ is monotone with respect to $\preceq_{_{\alg{C}}}$. The fact that $\Delta_{1}^{\alg{C}}(\Delta_{1}^{\alg{C}}a)=\Delta_{1}^{\alg{C}}a$ can be checked similarly. 
 We now prove (ii) in Definition \ref{def: conucleus}, namely that $\Delta_{1}^{\alg{C}}a\land\Delta_{1}^{\alg{C}}b\preceq_{_{\alg{C}}}\Delta_{1}^{\alg{C}}(a\land b)$. Observe that $\Delta_{1}^{\alg{C}}a\land \Delta_{1}^{\alg{C}}b, \Delta_{1}^{\alg{C}}(a\land b)\in\mathfrak{B}^{\alg{C}}$ and that, for every $c,d\in \mathfrak{B}^{\alg{C}}$ it holds that $c \preceq_{_{\alg{C}}} d$ if and only if $c\land d= c$. Moreover, as $\alg{C}\in\AlgP(\alg{A}^{e})$, it satisfies all the quasi-equations that hold in $\alg{A}^{e}$. Since it is easy to check that
 \[\alg{A}^{e}\vDash (\Delta_{1}\varphi\land\Delta_{1}\psi)\land(\Delta_{1}(\varphi\land\psi))\equals(\Delta_{1}\varphi\land\Delta_{1}\psi),\] then
 \[ (\Delta_{1}a\land\Delta_{1}b)\land(\Delta_{1}(a\land b))=(\Delta_{1}a\land\Delta_{1}b)\]
 holds in $\alg{C}$, thus $\Delta_{1}^{\alg{C}}a\land\Delta_{1}^{\alg{C}}b\preceq_{_{\alg{C}}}\Delta_{1}^{\alg{C}}(a\land b)$. This shows $\Delta_{1}^{\alg{C}}$ is a Boolean conucleus. By Fact \ref{fact: boolean elements invariant conuncleus} the map $\sigma_{_{\alg{B}}}$ defined for every $a\in \alg{B}$ as $\sigma_{_{\alg{B}}}(a)=h^{-1}\sigma_{_{\alg{C}}}(h(a))$ is a  Boolean conucleus on $\alg{B}$. It remains to show this conucleus is also complete, but this follows from Lemma \ref{lemma: delta is join of booleans}, upon noticing that, for $a\in\alg{B}$, $\Delta^{(\alg{A}^{e})^{I}}h(a)=\Delta_{1}^{\alg{C}}h(a)\in\mathfrak{B}^{\alg{C}}= h(\sigma_{_{\alg{B}}}[\alg{B}])$.

(ii)$\Rightarrow$(i). Consider $\alg{B}\in\Alg\logic{L}=\AlgI\AlgS\AlgP(\alg{A})$.  Suppose $
\alg{B}$ admits a complete Boolean conucleus $\sigma_{_{\alg{B}}}:\alg{B}\to\mathfrak{B}
^{\alg{B}}$, where $\mathfrak{B}^{\alg{B}}$ is the Boolean skeleton 
determined by some algebra $\alg{C}\leq\alg{A}^{I}$ isomorphic to $\alg{B}$ via $h:\alg{B}
\to\alg{C}$. It is immediate to check that the map defined for every $a\in\alg{C}$ as $\sigma_{_{\alg{C}}}a=h(\sigma_{_{\alg{B}}}h^{-1}a)$ is a  Boolean conucleus on $\alg{C}$.  Consider the algebra $\alg{B}^{\sigma}$ obtained by expanding $\alg{B}$ with the operation $\sigma_{_{\alg{B}}}$, and similarly for $\alg{C}$ with respect to $\sigma_{_{\alg{C}}}$. Clearly the 
isomorphism $h$ extends to an isomorphism between the expanded algebras $h:\alg{B}^{\sigma}\to\alg{C}^{\sigma}$ because, for  $a\in\alg{B}$, $\sigma_{_{\alg{C}}} h(a)=h(\sigma_{_{\alg{B}}} h^{-1}ha)=h(\sigma_{_{\alg{B}}}a)$.  We only need to show that, for $h(a)\in\alg{C}^{\sigma}$, ${\sigma_{_{\alg{C}}}}h(a)=\Delta_{1}h(a)$ where $\Delta_{1}$ is the operation computed on $(\alg{A}^{e})^{I}$. From the fact that $\sigma_{_{\alg{B}}}$ is complete,  together with Lemma \ref{lemma: delta is join of booleans}, we obtain that
\[\Delta_{1}h(a)=\bigvee\{b\in\mathfrak{B}^{(\alg{A}^{e})^{I}}: b\preceq h(a)\}\in h(\sigma_{_{\alg{B}}}[\alg{B}])=\mathfrak{B}^{\alg{C}},\]
where the join takes place in $(\alg{A}^{e})^{I}$ and $\preceq$ is a shorthand for $\preceq_{_{(\alg{A}^{e})^{I}}}$.
Clearly, ${\sigma_{_{\alg{C}}}}h(a)\preceq h(a)$ because ${\sigma_{_{\alg{C}}}}$ is a 
Boolean conucleus, so by the above display $\sigma_{_{\alg{C}}}h(a)\preceq\Delta_{1}h(a)$. Moreover, since $\sigma_{_{\alg{C}}}$ is monotone, $\Delta_{1}h(a)\preceq h(a)$ entails $\sigma_{_{\alg{C}}}\Delta_{1}h(a)=\Delta_{1}h(a)
\preceq\sigma_{_{\alg{C}}}h(a)$, thus $\sigma_{_{\alg{C}}}h(a)=\Delta_{1}h(a)$, as desired.
\end{proof}

The intuitive meaning of being a complete Boolean conucleus can be described as follows. If we see the elements of a model as truth-values, it expresses the fact that, whenever $a$ is a non-classical truth value, then it is  possible to select the greatest Boolean truth value below $a$. In other words, completeness ensures that a notion of ``closest'' Boolean value is always defined in a model.

The following corollary provide a valuable criterion to identify the algebras of $\Alg\logic{L}$ that cannot be equipped with the structure of an $\logic{L}^{e}$-algebra.
\begin{corollary}\label{cor: on separting}
Suppose $\alg{B}\in\Alg\logic{L}$ admits a Boolean complete conucleus $\sigma$. Then, for  different elements $a,b\in\alg{B}$ it holds that
\[ \text{if }\sigma(a)=\sigma(b)\text{ then } \sigma(\neg a)\neq\sigma(\neg b).\tag{Sep}\label{quasieq sep}\]
\end{corollary}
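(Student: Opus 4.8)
The plan is to argue by contraposition: I will suppose that $a,b\in\alg{B}$ are distinct elements with $\sigma(a)=\sigma(b)$ and $\sigma(\neg a)=\sigma(\neg b)$, and derive a contradiction with the fact that $\alg{B}\in\Alg\logic{L}$, i.e.\ that $\alg{B}$ is (up to isomorphism) a subalgebra of a power of the defining algebra $\alg{A}$. Working inside $\alg{C}\leq\alg{A}^{I}$ with $h:\alg{B}\to\alg{C}$ the witnessing isomorphism, and writing $a',b'$ for $h(a),h(b)$, the key observation is the component-wise description of $\sigma$: since $\sigma$ is a \emph{complete} Boolean conucleus, Lemma~\ref{lemma: delta is join of booleans} (transported along $h$ and using $h(\sigma[\alg{B}])=\mathfrak{B}^{\alg{C}}$) tells us that $h(\sigma(x))$ is computed coordinatewise exactly as the $\Delta_{1}$ operation on $(\alg{A}^{e})^{I}$, namely $\pi_{i}h(\sigma(x))=1$ if $\pi_{i}h(x)=1$ and $0$ otherwise.

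From this I would read off, for each coordinate $i\in I$, what $\sigma(a)=\sigma(b)$ and $\sigma(\neg a)=\sigma(\neg b)$ force on $\pi_i a'$ and $\pi_i b'$. The first equation says that, coordinatewise, $\pi_i a'=1 \iff \pi_i b'=1$. Recall that $\neg$ acts coordinatewise on $\alg{C}\leq\alg{A}^{I}$ and, by Definition~\ref{def: sublcassical logic}(ii) and the remark following it, $\neg$ on $\alg{A}$ is the fixed map $\neg 0=1$, $\neg 1=0$, $\neg\ant=\ant$; hence $\pi_i(\neg a')=1 \iff \pi_i a'=0$. So $\sigma(\neg a)=\sigma(\neg b)$ says that, coordinatewise, $\pi_i a'=0 \iff \pi_i b'=0$. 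Combining the two biconditionals, in each coordinate $i$ we get: $\pi_i a'=1 \iff \pi_i b'=1$ and $\pi_i a'=0 \iff \pi_i b'=0$, which (the third value being the only other possibility) forces $\pi_i a'=\pi_i b'$ for every $i\in I$. Therefore $a'=b'$ in $\alg{C}$, and since $h$ is injective, $a=b$, contradicting the assumption that $a$ and $b$ are different. This establishes (\ref{quasieq sep}).

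The only real subtlety, and the step I would be most careful about, is justifying the coordinatewise formula for $\sigma$ rather than just for the abstractly-defined $\Delta_1$. The point is that completeness of $\sigma$ says precisely $\bigvee^{\alg{A}^{I}}\{b\in\mathfrak{B}^{\alg{A}^{I}}: b\preceq_{_{\alg{A}^{I}}}h(a)\}\in h(\sigma[\alg{B}])$ for every $a$, and $h(\sigma[\alg{B}])=\mathfrak{B}^{\alg{C}}$; on the other hand Lemma~\ref{lemma: delta is join of booleans} identifies that same join with $\Delta_{1}h(a)$, computed coordinatewise on $(\alg{A}^{e})^{I}$. Since $\sigma$ is a Boolean conucleus, $\sigma(a)\preceq_{_{\alg{B}}} a$ and $\sigma(a)\in\mathfrak{B}^{\alg{B}}$, so $h(\sigma(a))$ is a Boolean element below $h(a)$; and it must be the \emph{largest} such, because it equals the above join. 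Hence $h(\sigma(a))=\Delta_{1}h(a)$, giving the desired coordinatewise description. Everything else is the elementary three-value case check described above, which needs no further calculation once the fixed behaviour of $\neg$ on $\alg{A}$ is invoked.
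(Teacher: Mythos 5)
Your proof is correct and follows essentially the same route as the paper's: both arguments reduce the claim to a computation in a power of $\alg{A}^{e}$, using the fact that $\alg{B}^{\sigma}$ is isomorphic to some $\alg{C}\leq(\alg{A}^{e})^{I}$ on which $\sigma$ acts as the coordinatewise $\Delta_{1}$. The paper packages the final verification as the quasi-equation
\[\Delta_{1}\varphi\thickapprox\Delta_{1}\psi ~\&~ \Delta_{1}\neg\varphi\thickapprox\Delta_{1}\neg\psi\implies\varphi\thickapprox\psi,\]
checked on the three-element algebra $\alg{A}^{e}$ and transported to $\alg{B}^{\sigma}$ by preservation of quasi-equations under $\AlgI\AlgS\AlgP$; your coordinatewise case analysis ($\pi_{i}a'=1\iff\pi_{i}b'=1$ from the first hypothesis, $\pi_{i}a'=0\iff\pi_{i}b'=0$ from the second via the fixed action of $\neg$, whence $\pi_{i}a'=\pi_{i}b'$ since $\ant$ is the only remaining value) is exactly that check carried out by hand, so the two proofs have the same content. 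The one point to tighten is your justification of the identity $h(\sigma(a))=\Delta_{1}h(a)$: as written it is circular, since you assert that $h(\sigma(a))$ ``must be the largest'' Boolean element below $h(a)$ ``because it equals the above join'', which is the very identity at stake. Completeness only guarantees that the join $\Delta_{1}h(a)$ lies in $h(\sigma[\alg{B}])=\mathfrak{B}^{\alg{C}}$; being a Boolean conucleus only gives the inequality $h(\sigma(a))\preceq\Delta_{1}h(a)$. For the converse inequality one still needs monotonicity of $\sigma$ applied to $\Delta_{1}h(a)\preceq h(a)$ together with the fact that $\sigma$ fixes its (Boolean) image, yielding $\Delta_{1}h(a)=\sigma_{_{\alg{C}}}\Delta_{1}h(a)\preceq\sigma_{_{\alg{C}}}h(a)$. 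This is precisely the argument given in the proof of Theorem \ref{thm: representation via conunclei}, direction (ii)$\Rightarrow$(i), so the cleanest repair is simply to cite that theorem for the identity, as the paper's proof does, rather than re-derive it.
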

\begin{proof}
 Theorem \ref{thm: representation via conunclei} ensures that the expansion $\alg{B}^{\sigma}$ of $\alg{B}$ is isomorphic via $h$ to some $\alg{C}\leq(\alg{A}^{e})^{I}$, thus, for $a\in\alg{B}$, $h(\sigma a)=\Delta_{1}h(a)$, where $\Delta_{1}$ is the operation on $(\alg{A}^{e})^{I}$. It easy to check that 
  \[\alg{A}^{e}\vDash\Delta_{1}\varphi\thickapprox\Delta_{1}\psi ~ \& ~ \Delta_{1}\neg\varphi\thickapprox\Delta_{1}\neg\psi\implies\varphi\thickapprox\psi.\]
  Since the validity of quasi-equations persists in isomorphic preimages of subalgebras of products, (\ref{quasieq sep}) is true in $\alg{B}^{\sigma}$. Thus, for different $a,b\in\alg{B}$, $\sigma(a)=\sigma(b)$ entails $\sigma(\neg a)\neq\sigma(\neg b)$.
 \end{proof}
The concrete meaning of this result is  briefly exemplified below.
\begin{example}
Consider the four element Kleene lattice depicted in Figure \ref{4 element chain}. In the light of Corollary \ref{cor: on separting}, it is easy to verify that and why this algebra cannot be turned into a three-valued \L ukasiwicz  algebra. To see this, suppose it admits a complete Boolean conucleus $\sigma$. This entails that, the Boolean skeleton determined by an isomorphism  $h:\alg{B}\to\alg{C}\leq\alg{A}^{I}$ is not empty. Therefore, as it can be easily checked, this  Boolean skeleton consists of the elements $\{0,1\}$, depicted with  solid circles as in Figure \ref{4 element chain}. The induced Boolean conucleus is represented by arrows. Clearly, $\sigma$  does not satisfy the condition  (\ref{quasieq sep}), as $a\neq\neg a$ but $\sigma (a)=\sigma(\neg a)$, against Corollary \ref{cor: on separting}.
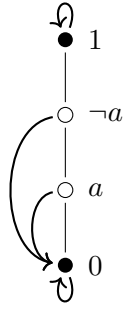
\begin{figure}
 \begin{center}
\begin{tikzpicture}[scale=1, dot/.style={inner sep=2.5pt,outer sep=2.5pt}, solid/.style={circle,fill,inner sep=2pt,outer sep=2pt}, empty/.style={circle,draw,inner sep=2pt,outer sep=2pt}]

\node  [label={right:$0$}](0) at (1,4) [solid] {};
 \node  [label={right:$a$}] (a) at (1,5) [empty] {};
  \node  [label={right:$\neg a$}] (-a) at (1,6) [empty] {};
  \node  [label={right:$1$}] (1) at (1,7) [solid] {};

 \draw[thick,->][ bend right=80 ] (a) edge (0);
 \draw[thick,->][  ] (0) edge [in=-110, out=-70 ,looseness=10](0);
 \draw[thick, ->][ ] (1) edge [in=110, out=70 ,looseness=10] (1);
 \draw[thick,->][ bend right=80 ] (-a) edge (0);
 \draw[-] (0) edge (a);
 \draw[-] (a) edge (-a);
 \draw[-] (-a) edge (1);
\end{tikzpicture}
\caption{The four element chain Kleene lattice $\alg{K}_{4}$}\label{4 element chain}
\end{center}
\end{figure}
\end{example}

For a wide class of subclassical logics the above results can be refined, and the extra-condition imposed on a conucleus of being complete can be substituted by a single inequality.  In such cases, the sufficient and necessary conditions needed to turn an algebra $\alg{B}\in\Alg\logic{L}$ into an $\Alg\logic{L}^{e}$-algebra can be verified on $\alg{B}$, without any reference to the second condition in Definition \ref{def: conucleus}. If we refer to the stock of logics introduced in Example \ref{example: subclassical logics}, this is the case of $\logic{K}_{3},\logic{LP},\logic{B},\logic{PWK},\logic{RM}_{3}$.
\begin{lemma}[\cite{burris1981course}, Thm. 7.15]\label{lem: intersezione kernel}
 Let $\alg{A},\alg{B}$ be similar algebras. Then $\alg{A}\in\AlgI\AlgS\AlgP(\alg{B})$ if and only if there exists a family of homomorphisms $h_{i}:\alg{A}\to\alg{B}$ such that $\bigcap_{i\in I}\ker h_{i}$ is the identity relation.
\end{lemma}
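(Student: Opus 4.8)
The plan is to move directly between a subdirect-style embedding $\alg{A} \hookrightarrow \alg{B}^{I}$ and a family of homomorphisms $\alg{A} \to \alg{B}$ whose kernels intersect to the diagonal, exploiting the fact that the coordinate projections of a power are exactly the data needed to recover an element.

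For the direction ($\Leftarrow$), assume we are given homomorphisms $h_{i} : \alg{A} \to \alg{B}$ with $\bigcap_{i \in I} \ker h_{i}$ equal to the identity relation on $\alg{A}$. First I would collect them into a single map $h : \alg{A} \to \alg{B}^{I}$, $h(a) = (h_{i}(a))_{i \in I}$; since operations on $\alg{B}^{I}$ are computed coordinatewise and each $h_{i}$ is a homomorphism, $h$ is a homomorphism. Next I would check injectivity: $h(a) = h(b)$ forces $h_{i}(a) = h_{i}(b)$ for every $i$, i.e. $\langle a,b\rangle \in \ker h_{i}$ for all $i$, hence $\langle a,b \rangle \in \bigcap_{i} \ker h_{i}$, the identity, so $a = b$. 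Thus $h$ is an embedding, $h[\alg{A}]$ is a subalgebra of $\alg{B}^{I} \in \AlgP(\alg{B})$, and $\alg{A} \iso h[\alg{A}] \in \AlgS\AlgP(\alg{B})$, giving $\alg{A} \in \AlgI\AlgS\AlgP(\alg{B})$.

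For the converse ($\Rightarrow$), assume $\alg{A} \in \AlgI\AlgS\AlgP(\alg{B})$, so there are an index set $I$, a subalgebra $\alg{C} \leq \alg{B}^{I}$, and an isomorphism $g : \alg{A} \to \alg{C}$. Put $h_{i} := \pi_{i} \circ g$, where $\pi_{i} : \alg{B}^{I} \to \alg{B}$ is the $i$-th projection; each $h_{i}$ is a composite of homomorphisms, hence a homomorphism $\alg{A} \to \alg{B}$. Finally, $\langle a,b \rangle \in \bigcap_{i} \ker h_{i}$ iff $\pi_{i}(g(a)) = \pi_{i}(g(b))$ for all $i$, iff $g(a) = g(b)$ (tuples agreeing on every coordinate coincide), iff $a = b$ by injectivity of $g$; so $\bigcap_{i} \ker h_{i}$ is the identity relation. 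I expect no genuine obstacle: the only point to keep in mind is that the $\AlgI$ in the target class is precisely what lets one pass freely between $\alg{A}$ and the concrete subalgebra of the power (either $h[\alg{A}]$ or $\alg{C}$); the remainder is routine bookkeeping about coordinatewise operations and kernels of composites.
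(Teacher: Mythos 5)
Your argument is correct and is exactly the standard proof of this classical fact, which the paper simply delegates to Burris--Sankappanavar (Thm.~7.15) without reproducing: bundling the $h_{i}$ into the induced map $a\mapsto(h_{i}(a))_{i\in I}$ for one direction, and composing an isomorphism onto a subalgebra of $\alg{B}^{I}$ with the coordinate projections for the other. Nothing is missing; the kernel computations and the role of $\AlgI$ are handled exactly as one should.
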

\begin{theorem}\label{thm: conucleus for kleene}
 Let $\logic{L}$ be a subclassical logic such that  $ 0\lor\ant\neq 0$ is true in $\alg{A}^{e}$. Then, for $\alg{B}\in\Alg\logic{L}$ the following are equivalent:
 
\begin{enumerate}[(i)]
 \item $\alg{B}$ can be turned into an $\Alg\logic{L}^{e}$-algebra;
 \item there exists a Boolean conucleus $\sigma$ on $\alg{B}$ satisfying $x\preceq\sigma x\lor\neg x$.
\end{enumerate}
\end{theorem}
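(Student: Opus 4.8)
The plan is to read the statement off Theorem~\ref{thm: representation via conunclei}, which already identifies (i) with the existence of a \emph{complete} Boolean conucleus on $\alg{B}$; it then suffices to prove that, under the hypothesis $0\lor\ant\neq0$ in $\alg{A}^{e}$, a Boolean conucleus on $\alg{B}$ is complete exactly when it satisfies $x\preceq\sigma x\lor\neg x$. Since being complete and satisfying the inequality are both properties of one fixed witnessing pair $(h,\sigma)$ — an isomorphism $h\colon\alg{B}\to\alg{C}\leq\alg{A}^{I}$ together with a Boolean conucleus $\sigma$ relative to it — the equivalence of these two properties transfers directly to the two existential statements (i) and (ii). So fix such a pair and transport the conucleus to $\sigma_{_{\alg{C}}}$ on $\alg{C}$. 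The one structural observation I need is the component-wise shape of a Boolean conucleus: for $c\in\alg{C}$ and $i\in I$ the entry $\pi_{i}\sigma_{_{\alg{C}}}c$ lies in $\{0,1\}$ (it belongs to $\mathfrak{B}^{\alg{C}}$) and satisfies $\pi_{i}\sigma_{_{\alg{C}}}c\preceq\pi_{i}c$ (since $\sigma_{_{\alg{C}}}c\preceq_{_{\alg{C}}}c$); because $0\prec\ant\prec1$ is a chain this pins $\pi_{i}\sigma_{_{\alg{C}}}c=0$ whenever $\pi_{i}c\in\{0,\ant\}$ and leaves a genuine choice only where $\pi_{i}c=1$.

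Next I reduce completeness to a single component condition. By Lemma~\ref{lemma: delta is join of booleans}, the join $\bigvee^{\alg{A}^{I}}\{b\in\mathfrak{B}^{\alg{A}^{I}}\colon b\preceq h(a)\}$ appearing in Definition~\ref{def: conucleus} is $\Delta_{1}h(a)$, the Boolean tuple whose $i$-th entry is $1$ precisely when $\pi_{i}h(a)=1$. Being Boolean, $\Delta_{1}h(a)$ lies in $h(\sigma[\alg{B}])=\mathfrak{B}^{\alg{C}}$ (using $\sigma[\alg{B}]=\mathfrak{B}^{\alg{B}}$ and Definition~\ref{def: boolean skeleton}) iff it already lies in $\alg{C}$; and when it does, applying $\sigma_{_{\alg{C}}}$ — which is monotone and fixes the elements of $\mathfrak{B}^{\alg{C}}$ — to $\Delta_{1}h(a)\preceq h(a)$, and using that $\Delta_{1}h(a)$ is the greatest Boolean element below $h(a)$, forces $\sigma_{_{\alg{C}}}h(a)=\Delta_{1}h(a)$. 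Combining this with the component picture, \emph{$\sigma$ is complete if and only if, for all $c\in\alg{C}$ and $i\in I$, $\pi_{i}c=1$ implies $\pi_{i}\sigma_{_{\alg{C}}}c=1$}; call this condition $(\ast)$.

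It remains to recognise $(\ast)$ inside the inequality. Since $\lor$ and $\neg$ act component-wise, $x\preceq\sigma x\lor\neg x$ reads $\pi_{i}c\preceq(\pi_{i}\sigma_{_{\alg{C}}}c)\lor^{\alg{A}}\neg^{\alg{A}}(\pi_{i}c)$ for all $c\in\alg{C}$ and $i\in I$, so I argue by cases on $\pi_{i}c$, recalling $\neg0=1$, $\neg1=0$, $\neg\ant=\ant$ and that $\lor$ is classical on $\{0,1\}$: the case $\pi_{i}c=0$ is vacuous ($0\preceq0\lor1=1$); the case $\pi_{i}c=1$ reduces to $1\preceq(\pi_{i}\sigma_{_{\alg{C}}}c)\lor0=\pi_{i}\sigma_{_{\alg{C}}}c$, i.e.\ to clause $(\ast)$; and the case $\pi_{i}c=\ant$ demands $\ant\preceq0\lor^{\alg{A}}\ant$, which is \emph{where and why} the hypothesis $0\lor\ant\neq0$ is used — it leaves only the values $\ant$ or $1$ for $0\lor^{\alg{A}}\ant$, both $\succeq\ant$. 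Hence the inequality is equivalent to $(\ast)$, hence to completeness, and (i)$\Leftrightarrow$(ii) follows. I expect the $\pi_{i}c=\ant$ case to be the only genuine point of the argument — it is exactly the obstruction that fails for a system such as Soboci\'nski's, where $0\lor\ant=0$ and the simplified criterion does not hold — while everything else is routine bookkeeping, the one thing to keep straight being that the ``$\lor$'' in the displayed inequality is the algebra operation $\lor^{\alg{A}}$ (which agrees with the chain supremum $0\prec\ant\prec1$ on $\alg{SK}$ but not on $\alg{WK}$), not the order-theoretic join.
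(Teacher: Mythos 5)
Your proof is correct, and its computational core --- the component-wise case analysis on $\pi_{i}c\in\{0,\ant,1\}$, with the hypothesis $0\lor\ant\neq 0$ doing its work precisely in the $\ant$-component --- is the same calculation the paper performs. Where you genuinely diverge is in how the direction (ii)$\Rightarrow$(i) is closed off. The paper does not pass through the completeness condition at all: it expands $\alg{C}\leq\alg{A}^{I}$ with $\sigma_{_{\alg{C}}}$, verifies by the same three-case computation that each projection $\pi_{i}$ extends to a homomorphism $\alg{C}^{\sigma}\to\alg{A}^{e}$ (i.e.\ $\pi_{i}(\sigma_{_{\alg{C}}}a)=\Delta_{1}(\pi_{i}a)$), and then invokes Lemma \ref{lem: intersezione kernel} on the trivial intersection of kernels to place $\alg{B}^{\sigma}$ in $\AlgI\AlgS\AlgP(\alg{A}^{e})=\Alg\logic{L}^{e}$. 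You instead prove the sharper local statement that, for a fixed witnessing pair $(h,\sigma)$ and under the stated hypothesis, the conucleus is complete if and only if it satisfies $x\preceq\sigma x\lor\neg x$, and then let Theorem \ref{thm: representation via conunclei} carry both directions. Your route buys a cleaner modular structure --- the theorem becomes a corollary of Theorem \ref{thm: representation via conunclei} plus an equivalence of two properties of one pair --- and it makes explicit \emph{why} completeness may be dropped, via the reduction to your condition $(\ast)$, which you justify correctly through Lemma \ref{lemma: delta is join of booleans} and the sandwich $\sigma_{_{\alg{C}}}h(a)\preceq\Delta_{1}h(a)$, $\Delta_{1}h(a)=\sigma_{_{\alg{C}}}\Delta_{1}h(a)\preceq\sigma_{_{\alg{C}}}h(a)$. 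The paper's route is more self-contained for (ii)$\Rightarrow$(i) and exhibits the subdirect representation concretely; your remarks on the failure for Soboci\'nski's system and on $\lor^{\alg{A}}$ not being the order-theoretic join on $\alg{WK}$ are both accurate.
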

\begin{proof}
 (i)$\Rightarrow$(ii). 
 If $\alg{B}$ can be turned into an $\Alg\logic{L}^{e}$-algebra, then  the expansion $\alg{B}^{\Delta}$ of $\alg{B}$ is isomorphic  to $\alg{C}\leq(\alg{A}^{e})^{I}$ via $h$. Theorem \ref{thm: representation via conunclei} ensures that $\Delta_{1}$ on $\alg{B}$ is a Boolean conucleus. The fact that for $a\in\alg{B}$, it holds $a\preceq\Delta_{1} a\lor\neg a$ is equivalent to  $h(a)\preceq\Delta_{1} h(a)\lor\neg h(a)$ on $\alg{C}$. To prove this, it suffices to  mechanically check that, for each $i\in I$, $\pi_{i}h(a)\preceq\pi_{i}(\Delta_{1}(h (a)))\lor \pi_{i}(\neg h(a))$, upon recalling the assumption that $\ant\lor 0\neq 0$ holds in $\alg{A}^{e}$. 
 
(ii)$\Rightarrow$(i) Let $\alg{B}$ as in the statement, thus, as in Theorem \ref{thm: representation via conunclei}, $\alg{B}^{\sigma}\iso\alg{C}^{\sigma}$ via $h$, for some $\alg{C}\leq\alg{A}^{I}$. For each $i\in I$ consider the projection map $\pi_{i}:\alg{C}\to\alg{A}^{e}$. It is clear that each $\pi_{i}$ preserve the operations of the language of $\alg{A}$. We now show that each map $\pi_{i}$ extends to a homomorphism $\alg{C}^{\sigma}\to\alg{A}^{e}$. Thus, we need to prove that, for each $a\in\alg{C}$ it holds $\pi_{i}(\sigma_{_{\alg{C}}} a)=\Delta_{1}(\pi_{i}a)$. To this end, we consider three exhaustive and mutually exclusive cases. If $\pi_{i}a=0$ or $\pi_{i}a=\ant$ then, since $\sigma_{_{\alg{C}}}a\leq a$ and $\sigma_{_{\alg{C}}}a\in\mathfrak{B}^{\alg{C}^{\sigma}}$, necessarily $\pi_{i}(\sigma_{_{\alg{C}}}a)=0=\Delta_{1}(\pi_{i}a)$. Suppose now $\pi_{i}a=1$. Since $a\preceq\sigma_{_{\alg{C}}}a\lor\neg a$ is true in $\alg{C}^{\sigma}$, then $\pi_{i}(\sigma_{_{\alg{C}}}a\lor\neg a)=1$. Thus, since  $\pi_{i}\neg a=0$, we conclude $\pi_{i}(\sigma_{_{\alg{C}}} a)=1=\Delta_{1}(\pi_{i}a)$, as desired. Moreover notice that
$\bigcap_{i\in I}\Ker\pi_{i}$ is the identity relation on $\alg{B}$,
so $\alg{B}^{\sigma}\in\AlgI\AlgS\AlgP(\alg{A}^{e})=\Alg\logic{L}^{e}$ by Lemma \ref{lem: intersezione kernel}.
 \end{proof}
 
 \section{External operators and classical recapture}\label{sec: properties external}

 As already hinted, the operator $\Delta_{1}$ can be seen a way to provide a yes/no answer, understood as a $0/1$-answer, to the question of whether its argument is both classical and true. In other words, $\Delta_{1}$ enriches the expressive power of the underlying logic $\logic{L}$ in order to re-capture the ``external'' notion of classical truth, which cannot be represented by the linguistic resources of $\logic{L}$ alone. This section explores some of the classical features that are recaptured in the step from a sublassical logic $\logic{L}$ to its external version $\logic{L}^{e}$.

 Informally speaking, it is easy to see that if $\Gamma\vdash\varphi$ is a classical rule failing in $\logic{L}$, the external version of $\logic{L}$ validates a rule  encoding the idea that ``whenever the arguments of the formulas occurring in the rule can be safely asserted, the rule can be safely applied". The next proposition shows how to produce $\logic{L}^{e}$-valid inferences of this kind.  
\begin{proposition}\label{fact: inference recapture}
Let $\gamma_{1},\dots,\gamma_{n}\vdash\varphi$ be a classically valid rule and $
\alg{Fm}$ be the formula algebra in the language of $\logic{L}^{e}$. Then 
$h(\gamma_{1}),\dots, h(\gamma_{n})\vdash h(\varphi)$ is a rule of $\logic{L}^{e}$ where
   $h:\alg{Fm}\to\alg{Fm}$ is the substitution mapping $x\mapsto\Delta_{1}x$ for each variable $x$.
\end{proposition}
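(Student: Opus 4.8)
The plan is to exploit the fact that the matrix $\langle \alg{A}^{e},F\rangle$ is characteristic for $\logic{L}^{e}$, so it suffices to show that $h(\gamma_{1}),\dots,h(\gamma_{n})\vdash_{\langle\alg{A}^{e},F\rangle}h(\varphi)$, i.e.\ that every valuation $v:\alg{Fm}\to\alg{A}^{e}$ that designates all of $v(h(\gamma_{i}))$ also designates $v(h(\varphi))$. First I would observe that the composite $v\circ h$ is again a homomorphism $\alg{Fm}\to\alg{A}^{e}$, and that its behaviour on the variables is special: for every variable $x$ we have $(v\circ h)(x)=v(\Delta_{1}x)=\Delta_{1}^{\alg{A}^{e}}(v(x))\in\{0,1\}$, since $\Delta_{1}^{\alg{A}^{e}}$ always outputs a value in the Boolean skeleton $\{0,1\}$ of $\alg{A}^{e}$. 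Writing $w\assign v\circ h$, the key point is thus that $w$ maps all variables into $\{0,1\}$.

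The next step is to note that $\{0,1\}$ is a subuniverse of the $\mathcal L$-reduct of $\alg{A}$ (this is part (i) of Definition \ref{def: sublcassical logic}, $\alg{B}_{2}$ being a subreduct of $\alg{A}$), and moreover it is closed under $\Delta_{1}^{\alg{A}^{e}}$ (which fixes $1$ and sends $0$ to $0$). Hence $\{0,1\}$ is a subuniverse of the whole algebra $\alg{A}^{e}$, and the subalgebra it carries is term-equivalent to (indeed equals, on the nose, for the connectives $\land,\lor,\neg$) the two-element Boolean algebra $\alg{B}_{2}$, with $\Delta_{1}$ acting as the identity. Since $w$ sends every variable into this Boolean subalgebra, and the formulas $\gamma_{1},\dots,\gamma_{n},\varphi$ are in the $\mathcal L$-language (no $\Delta_{1}$ occurs in a classically valid rule, as $\Delta_{1}$ is not a classical connective — but even if one allowed it, it acts as identity on $\{0,1\}$, which would cause no harm), it follows that $w(\gamma_{1}),\dots,w(\gamma_{n}),w(\varphi)$ all lie in $\{0,1\}$ and are computed exactly as in $\alg{B}_{2}$.

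Now the hypothesis enters: $\gamma_{1},\dots,\gamma_{n}\vdash\varphi$ is classically valid, i.e.\ valid in $\langle\alg{B}_{2},\{1\}\rangle$. Together with $F\cap\{0,1\}=\{1\}$ (again part (i) of Definition \ref{def: sublcassical logic}), designation of an element of $\{0,1\}$ in the matrix $\langle\alg{A}^{e},F\rangle$ coincides with being equal to $1$. So if $v(h(\gamma_{i}))=w(\gamma_{i})\in F$ for all $i$, then each $w(\gamma_{i})=1$, whence by classical validity $w(\varphi)=1\in F$, that is $v(h(\varphi))\in F$. This establishes the matrix entailment and hence, by characteristicity of $\langle\alg{A}^{e},F\rangle$, the claimed rule of $\logic{L}^{e}$. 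I do not expect any real obstacle here; the only point requiring a line of care is the identification of the subalgebra on $\{0,1\}$ with $\alg{B}_{2}$ and the matching of designated values via $F\cap\alg{B}_{2}=\{1\}$, so that ``classically valid'' can be applied verbatim to $w$ restricted to that subalgebra.
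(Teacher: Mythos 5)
Your proof is correct and follows essentially the same route as the paper's: both arguments hinge on the observation that the substitution $x\mapsto\Delta_{1}x$ forces every valuation to take Boolean values on the premises and conclusion, so that designation collapses to equality with $1$ and classical validity applies. If anything, you are more explicit than the paper about why the computation of $w(\gamma_{i})$ and $w(\varphi)$ takes place inside the subalgebra on $\{0,1\}$ identified with $\alg{B}_{2}$ --- a step the paper's appeal to the rule being valid in $\logic{CL}^{e}$ leaves implicit.
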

\begin{proof}
 Let $\gamma_{1},\dots,\gamma_{n}\vdash\varphi$ be a classically valid rule and let $h$ be as in the statement.  The restriction of $h$ to the language  $\langle\land,\lor,\neg,\Delta_{1}\rangle$ is a substitution on the formula algebra of $\logic{CL}^{e}$, thus, $h(\gamma_{1}),\dots, h(\gamma_{n})\vdash h(\varphi)$ is a rule of $\logic{CL}^{e}$. Let $g:\alg{Fm}\to\alg{A}^{e}$, where $\langle\alg{A},F\rangle$ is the defining matrix of $\logic{L}$, a homomorphism such that $g(h(\gamma_{1}))\in F,\dots, g(h(\gamma_{n}))\in F$. By definition of $h$ this entails $g(h(\gamma_{i}))=1$ for each $1\leq i\leq n$.  Since $h(\gamma_{1}),\dots, h(\gamma_{n})\vdash h(\varphi)$ is a rule of $\logic{CL}^{e}$ we conclude that $g(h(\varphi))=1$. 
\end{proof}

Let us briefly provide an example.
\begin{example}
 Consider the logic $\logic{PWK}^{e}$. Clearly the rule of modus ponens $\varphi,\neg\varphi\lor\psi\vdash\psi$ fails in $\logic{PWK}$. From Proposition \ref{fact: inference recapture} it follows that \[\Delta_{1}\varphi,\neg(\Delta_{1}\varphi)\lor\Delta_{1}\psi\vdash\Delta_{1}\psi \]
 is a valid rule of $\logic{PWK}^{e}$.
 A similar argument shows that $\Delta_{1}\varphi\lor\neg\Delta_{1}\varphi$ is a theorem of $\logic{B}^{e}$. It is important to notice that  $\Delta_{1}(\varphi\lor\neg\varphi)$ is not a theorem of $\logic{B}^{e}$: this emphasizes the role of the substitution $h$ in Proposition \ref{fact: inference recapture}.
\end{example}

We now  recall the notion of algebraizability. This concept lies at the core of algebraic logic, and it requires the availability of a mutual interpretation between a  logic $\logic{L}$ and the equational consequence of a specific class of algebras $\class{K}$. This interpretation is witnessed by two maps $\boldtau(x)$, $\boldrho(x,y)$ that assign a set of equations $\boldtau(\varphi)$ to each formula $\varphi$ and a set of formulas $\boldrho(\varphi,\psi)$ to each equation $\varphi\equals\psi$.

\begin{definition}\label{def: algebraizability}
 A logic $\logic{L}$ is algebraizable with respect to the class of algebras $\class{K}$ (in the same language as $\logic{L}$) if there are maps $\boldtau:Fm\to\mathcal{P}(Eq)$ from formulas to sets of equations and $\boldrho:Eq\to\mathcal{P}(Fm)$ from equations to sets of formulas such that the following hold:
\begin{gather*}
 \Gamma\vdash_{\logic{L}}\varphi\iff\boldtau(\Gamma)\vDash_{\class{K}}\boldtau(\varphi) \tag{Alg1} \label{ALG1}\\
   \varphi\equals\psi\Dashv\vDash_{\class{K}}\boldtau\boldrho(\varphi\equals\psi)\tag{Alg2} \label{ALG2}.\\
\end{gather*}
\end{definition}
In the case of classical logic the  two maps witnessing the algebraizability with respect to the variety of Boolean algebras $\class{BA}$ are defined as $\boldtau(\varphi)=\{\varphi\equals\varphi\lor\neg\varphi\}$ and $\boldrho(\varphi\equals\psi)=\{\varphi\to\psi,\psi\to\varphi\}$.  Conditions \ref{ALG1}, \ref{ALG2} are instantiated accordingly by letting $\class{K}=\class{BA}$.
Remarkably, if a logic is algebraizable with respect to $\class{K}$, then it is algebraizable with respect to $\Alg\logic{L}$.  In this case $\Alg\logic{L}$ is called the equivalent algebraic semantics of $\logic{L}$. 
The following important theorem will be used in the forthcoming part. 

\begin{theorem}\cite[Thm.3.2.2]{Cz01}\label{thm: czela on alg}. 	
 Let  $\logic{L}$ be an algebraizable with respect to a finite set of formulas  $\boldrho(x,y)$. If $\logic{L}$  has a reduced characteristic matrix $\langle\alg{A},F\rangle$ and $\alg{A}$ is finite, then $\Alg\logic{L}=\AlgI\AlgS\AlgP(\alg{A})$. \end{theorem}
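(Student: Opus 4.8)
The plan is to show that $\Alg\logic{L}$ coincides with the quasivariety generated by the single algebra $\alg{A}$, and then to invoke the fact recalled in Section~\ref{sec: preliminaries} that the smallest quasivariety containing a finite algebra is its $\AlgI\AlgS\AlgP$-closure. First note that $\logic{L}$ is finitary, being complete with respect to the finite matrix $\langle\alg{A},F\rangle$; since it is moreover algebraizable with a finite equivalence set $\boldrho(x,y)$, the class $\Alg\logic{L}$ is a quasivariety and, by the general theory of algebraizable logics, it is the equivalent algebraic semantics of $\logic{L}$, so that $\vdash_{\logic{L}}$ and $\vDash_{\Alg\logic{L}}$ are mutually interpretable through the transformers $\boldtau,\boldrho$ of Definition~\ref{def: algebraizability}. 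In particular, as a standard consequence of \ref{ALG1} and \ref{ALG2},
\[ \Theta\vDash_{\Alg\logic{L}}\epsilon\equals\delta \quad\Longleftrightarrow\quad \boldrho(\Theta)\vdash_{\logic{L}}\boldrho(\epsilon\equals\delta), \]
for every set of equations $\Theta$ and every equation $\epsilon\equals\delta$, where $\boldrho$ is applied pointwise as in \ref{ALG2}. Since $\langle\alg{A},F\rangle$ is a reduced model we have $\alg{A}\in\Alg\logic{L}$, and, $\Alg\logic{L}$ being a quasivariety, this already yields $\AlgI\AlgS\AlgP(\alg{A})\subseteq\Alg\logic{L}$.

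For the converse inclusion it suffices to show that every quasi-equation valid in $\alg{A}$ is valid throughout $\Alg\logic{L}$: then the quasivariety $\Alg\logic{L}$ is contained in the class of algebras satisfying all those quasi-equations, which, $\alg{A}$ being finite, is exactly $\AlgI\AlgS\AlgP(\alg{A})$. So let $q=\bigl(\bigwedge_{i\leq n} p_i\equals q_i \Rightarrow p\equals q\bigr)$ be valid in $\alg{A}$, that is, $\{p_i\equals q_i:i\leq n\}\vDash_{\alg{A}}p\equals q$; we must derive $\{p_i\equals q_i:i\leq n\}\vDash_{\Alg\logic{L}}p\equals q$. The key point is reducedness of $\langle\alg{A},F\rangle$: since $\logic{L}$ is algebraizable, $\Tarski^{\alg{A}}F$ equals the Leibniz congruence $\Leibniz^{\alg{A}}F=\{\pair ab:\boldrho^{\alg{A}}(a,b)\subseteq F\}$, where $\boldrho^{\alg{A}}(a,b)$ is the set of values at $a,b$ of the formulas of $\boldrho$; as the matrix is reduced this congruence is the identity, so that
\[ \boldrho^{\alg{A}}(a,b)\subseteq F \quad\Longleftrightarrow\quad a=b\qquad(a,b\in\alg{A}), \]
and in particular $\boldrho^{\alg{A}}(a,a)\subseteq F$ always holds. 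By the equivalence displayed above and the completeness of $\langle\alg{A},F\rangle$, the goal $\{p_i\equals q_i\}\vDash_{\Alg\logic{L}}p\equals q$ is equivalent to $\bigcup_{i\leq n}\boldrho(p_i\equals q_i)\vdash_{\langle\alg{A},F\rangle}\boldrho(p\equals q)$. So take any homomorphism $w\colon\alg{Fm}\to\alg{A}$ with $w[\boldrho(p_i\equals q_i)]\subseteq F$ for all $i\leq n$: then $\boldrho^{\alg{A}}(w(p_i),w(q_i))\subseteq F$, hence $w(p_i)=w(q_i)$, for each $i$; since $q$ holds in $\alg{A}$, this forces $w(p)=w(q)$, whence $w[\boldrho(p\equals q)]=\boldrho^{\alg{A}}(w(p),w(p))\subseteq F$. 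Thus $w$ satisfies the consequent whenever it satisfies the antecedent, giving $\bigcup_{i\leq n}\boldrho(p_i\equals q_i)\vdash_{\langle\alg{A},F\rangle}\boldrho(p\equals q)$, as desired.

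Putting the two inclusions together yields $\Alg\logic{L}=\AlgI\AlgS\AlgP(\alg{A})$. I expect the main obstacle to be the algebraizability bookkeeping behind the first two paragraphs: one has to derive carefully, from Definition~\ref{def: algebraizability}, both the dictionary relating $\vDash_{\Alg\logic{L}}$ to $\vdash_{\logic{L}}$ via $\boldrho$ and the description of a reduced matrix of an algebraizable logic through its equivalence formulas --- routine but somewhat delicate facts of the general theory (see \cite{Cz01,Font16}). The finiteness of $\boldrho$ together with the finitarity of $\logic{L}$ is what guarantees that $\Alg\logic{L}$ is a genuine quasivariety, whereas the finiteness of $\alg{A}$ enters only at the very end, to replace the quasivariety generated by $\alg{A}$ with its $\AlgI\AlgS\AlgP$-closure.
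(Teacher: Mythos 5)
The paper does not prove this statement at all: it is imported verbatim as \cite[Thm.~3.2.2]{Cz01}, so there is no internal proof to compare against. Your argument is a correct reconstruction of why the cited result holds, and the overall strategy --- get $\AlgI\AlgS\AlgP(\alg{A})\subseteq\Alg\logic{L}$ from $\alg{A}\in\Alg\logic{L}$ plus quasivariety-hood, and get the reverse inclusion by showing every quasi-equation of $\alg{A}$ propagates to all of $\Alg\logic{L}$ via the $\boldrho$-dictionary and the Leibniz description of reducedness --- is essentially the textbook route. Two remarks on where the weight of the argument really sits. First, the ``easy'' inclusion is less free than it looks: the paper explicitly warns that $\Alg\logic{L}$ need not be closed under $\AlgS$ for an arbitrary logic, so your appeal to $\Alg\logic{L}$ being a quasivariety (hence $\AlgS$-closed) is doing genuine work there and rests entirely on finitary algebraizability; it would be worth saying explicitly that this is the Blok--Pigozzi/Czelakowski theorem that the equivalent algebraic semantics of a finitary, finitely algebraizable logic is a quasivariety, since the theorem's hypothesis only mentions finiteness of $\boldrho$ (finiteness of $\boldtau$, or a substitute, is needed for that background result and is available here because $\logic{L}$ is complete with respect to a finite matrix). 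Second, in the converse direction you quietly identify the Suszko-reducedness used in the paper's definition of a reduced model with Leibniz-reducedness ($\Leibniz^{\alg{A}}F=\{\pair ab:\boldrho^{\alg{A}}(a,b)\subseteq F\}$ being the identity); that identification is valid because algebraizable logics are protoalgebraic, but it is a step worth flagging rather than eliding. With those two standard facts granted, the quasi-equation transfer argument and the final appeal to $\AlgQ(\alg{A})=\AlgI\AlgS\AlgP(\alg{A})$ for finite $\alg{A}$ are both correct.
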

 
In general, the notion of algebraizability states an equivalence, witnessed by the maps $\boldtau,\boldrho$, between the consequence relation of a logic and the equational consequence of its algebraic counterpart. 

When a logic $\logic{L}^{e}$ has $\langle\alg{A}^{e},F\rangle$ as defining matrix, it is possible to define two more unary operators of special interest, as follows:
\[\Delta_{0}\varphi\assign\Delta_{1}\neg\varphi \text{ and } \Delta_{\ant}\varphi\assign\neg(\Delta_{1}\varphi\lor\Delta_{1}\neg\varphi).\]

They are computed on $\alg{A}^{e}$ according with the truth tables below
\begin{center}
 \begin{tabular}{>{$}c<{$}|>{$}c<{$}>{$}c<{$}>{$}c<{$}}
   \varphi&   \Delta_{0}\varphi \\[.2ex]
 \hline
       0 & 1 \\
       \ant & 0 \\          
       1 & 0
\end{tabular}\hspace{10pt}
\begin{tabular}{>{$}c<{$}|>{$}c<{$}>{$}c<{$}>{$}c<{$}}
   \varphi&\Delta_{\ant}\varphi  \\[.2ex]
 \hline
       0 & 0 \\
       \ant & 1 \\          
       1 & 0
\end{tabular}
\end{center}

The intended  reading is clear: $\Delta_{0}\varphi$ intuitively asserts that ``$\varphi$ is classical and false'', while $\Delta_{\ant}\varphi$ asserts that ``$\varphi$ is non-classical''.
It is now convenient to fix new notations. Let $\alg{A}$ be an arbitrary but fixed three-element algebra having $\alg{B}_{2}$ as subreduct. By $\logic{L}_{1}$ we denote the logic defined by $\langle\alg{A},\{1\}\rangle$, while $\logic{L}_{\ant}$  is the logic defined by $\langle\alg{A},\{1,\ant\}\rangle$. 
This also applies to the external version of a subclassical logic. Thus,  $\logic{L}_{1}^{e}$ is the logic defined by the matrix $\langle\alg{A}^{e},\{1\}\rangle$ while $\logic{L}_{\ant}^{e}$ is the logic defined by $\langle\alg{A}^{e},\{1,\ant\}\rangle$. For a set $\Gamma\subseteq Fm$ and $i\in\{0,\ant,1\}$ we denote $\{\Delta_{i}(\gamma)\}_{\gamma\in\Gamma}$ as $\Delta_{i}\Gamma$.

With this new information at hand, it is easy to show that $\logic{L}^{e}$ is an algebraizable logic.

\begin{theorem}\label{thm: external version is algebraizable}
 Let $\logic{L}$ be a  subclassical three-valued logic, and $\langle\alg{A}^{e},F\rangle$ be the characteristic matrix of $\logic{L}^{e}$. Then $\logic{L}^{e}$ is algebraizable, and its  equivalent algebraic semantics is $\Alg\logic{L}^{e}=\AlgI\AlgS\AlgP(\alg{A}^{e})$.
 \end{theorem}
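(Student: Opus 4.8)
The plan is to exhibit explicit translation maps $\boldtau$ and $\boldrho$ witnessing algebraizability of $\logic{L}^{e}$ with respect to the quasivariety $\class{K}=\AlgI\AlgS\AlgP(\alg{A}^{e})$, and then invoke the general theory (together with Theorem~\ref{thm: czela on alg}) to conclude that $\class{K}=\Alg\logic{L}^{e}$ is the equivalent algebraic semantics. The key observation is that in $\alg{A}^{e}$ the external operator $\Delta_{1}$ collapses every value to the Boolean skeleton $\{0,1\}$, and on that skeleton everything behaves classically. Since $\logic{L}^{e}$ is defined by the single finite matrix $\langle\alg{A}^{e},F\rangle$ with $F\cap\{0,1\}=\{1\}$, the designated set is ``detected'' by $\Delta_{1}$: for every $a\in\alg{A}^{e}$ we have $a\in F$ if and only if $\Delta_{1}a\in F$ when $F=\{1\}$, and $a\in F$ if and only if $\neg\Delta_{1}\neg a\in F$ when $F=\{1,\ant\}$ (here $\neg\Delta_{1}\neg a=1$ exactly when $a\in\{1,\ant\}$). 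This suggests the defining formula $\delta(x)$ to use: $\delta(x)\assign\Delta_{1}x$ in the case $F=\{1\}$ (i.e.\ for $\logic{L}_{1}^{e}$) and $\delta(x)\assign\neg\Delta_{1}\neg x=\Delta_{0}x$ read classically, i.e.\ $\neg\Delta_{0}x$, in the case $F=\{1,\ant\}$; in either case $\delta$ takes only Boolean values and $a\in F \iff \delta^{\alg{A}^{e}}(a)=1$.

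First I would set $\boldtau(x)=\{\,\delta(x)\equals\delta(x)\lor\neg\delta(x)\,\}$ (equivalently $\{\delta(x)\equals 1\}$ were constants available; without constants one uses the standard trick of equating to a tautology) and $\boldrho(x,y)=\{\,x\to_{\mathrm{cl}}y,\ y\to_{\mathrm{cl}}x\,\}$, where $x\to_{\mathrm{cl}}y$ is a classical implication term definable from $\land,\lor,\neg$ via $\neg\delta(x)\lor\delta(y)$ — note we may freely use $\delta$ inside $\boldrho$ since the whole point is that $\delta$ projects onto the Boolean part where classical implication is well-behaved. Actually the cleanest choice is $\boldrho(x,y)=\{\,\Delta_{1}(x)\leftrightarrow\Delta_{1}(y)\,\}$ style congruence formulas built so that $\boldrho^{\alg{A}^{e}}(a,b)\subseteq F$ exactly when $\Delta_{1}a=\Delta_{1}b$ and $\Delta_{1}\neg a=\Delta_{1}\neg b$, because by Corollary~\ref{cor: on separting}'s underlying fact the pair $(\Delta_{1}a,\Delta_{1}\neg a)$ determines $a$ in $\alg{A}^{e}$; this is what forces $\boldrho$ to separate points. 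Then I would verify the two algebraizability conditions: \ref{ALG1} $\Gamma\vdash_{\logic{L}^{e}}\varphi\iff\boldtau(\Gamma)\vDash_{\class{K}}\boldtau(\varphi)$ reduces, since $\logic{L}^{e}$ and $\vDash_{\class{K}}$ are both determined by the single matrix/algebra $\alg{A}^{e}$, to the matrix-level equivalence $h[\Gamma]\subseteq F\iff \alg{A}^{e}\models\boldtau(h[\Gamma])$ implies $\alg{A}^{e}\models\boldtau(h(\varphi))$, which holds because $a\in F\iff\delta^{\alg{A}^{e}}(a)=1$; and \ref{ALG2} $\varphi\equals\psi\mathbin{\Dashv\vDash}_{\class{K}}\boldtau\boldrho(\varphi\equals\psi)$ reduces to checking in $\alg{A}^{e}$ that $a=b\iff \boldrho^{\alg{A}^{e}}(a,b)\subseteq F$, which is precisely the point-separation property of the chosen $\boldrho$.

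Having established algebraizability with respect to \emph{some} class, the general theory (recalled in the excerpt) gives algebraizability with respect to $\Alg\logic{L}^{e}$. To pin down that $\Alg\logic{L}^{e}=\AlgI\AlgS\AlgP(\alg{A}^{e})$, I would apply Theorem~\ref{thm: czela on alg}: $\logic{L}^{e}$ has the finite reduced characteristic matrix $\langle\alg{A}^{e},F\rangle$ (it is reduced because $\langle\alg{A},F\rangle$ was reduced for $\logic{L}$ and adding the term-definable-in-spirit operator $\Delta_{1}$ cannot coarsen the Leibniz congruence — indeed $\Delta_{1}$ only refines it), and $\boldrho$ was chosen finite, so the hypotheses of Theorem~\ref{thm: czela on alg} are met and $\Alg\logic{L}^{e}=\AlgI\AlgS\AlgP(\alg{A}^{e})$. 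The main obstacle, and the step deserving genuine care rather than a one-line dismissal, is the correct choice of the defining equation $\boldtau$ and especially of $\boldrho$: one must handle the two cases $F=\{1\}$ and $F=\{1,\ant\}$ uniformly (or explicitly split them), and must check that $\boldrho$ really separates all three values of $\alg{A}^{e}$ — this is exactly where the hypothesis that $\alg{B}_{2}$ is a subreduct and $\neg\neg x\equals x$ holds (so $\ant=\neg\ant$ is the unique non-classical fixpoint) is used, since it guarantees that the map $a\mapsto(\Delta_{1}a,\Delta_{1}\neg a)$ is injective on $\{0,\ant,1\}$. Everything else is a routine verification on the three-element algebra $\alg{A}^{e}$.
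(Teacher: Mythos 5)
Your proposal is correct and follows essentially the same route as the paper: exhibit explicit finite transformers $\boldtau,\boldrho$ built from the external operators (splitting the cases $F=\{1\}$ and $F=\{1,\ant\}$), verify (\ref{ALG1}) and (\ref{ALG2}) directly on the three-element algebra $\alg{A}^{e}$ using the fact that $\Delta_{1}$ projects onto the Boolean skeleton and that $a\mapsto(\Delta_{1}a,\Delta_{1}\neg a)$ separates points, and then invoke Theorem~\ref{thm: czela on alg} to identify $\Alg\logic{L}^{e}$ with $\AlgI\AlgS\AlgP(\alg{A}^{e})$. The only difference is cosmetic: the paper packages the point-separation into a single Boolean-valued implication $\varphi\to\psi$ (which evaluates to $1$ exactly when $\varphi\preceq\psi$) and takes $\boldrho(\varphi\equals\psi)=\{\varphi\to\psi,\psi\to\varphi\}$, whereas you describe the equivalent biconditional-style formulas directly.
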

 
\begin{proof}
Define a new binary operation  $\to$  as follows:
\[\varphi\to\psi=\Delta_{0}\varphi\lor(\Delta_{\ant}\varphi\land(\Delta_{\ant}\psi\lor\Delta_{1}\psi))\lor(\Delta_{1}\varphi\land\Delta_{1}\psi).\]

 If  $\logic{L}^{e}=\logic{L}^{e}_{1}$, consider $\boldtau_{1}(\varphi)\assign\{\varphi\equals  \varphi\to\varphi\}$. 
If $\logic{L}^{e}=\logic{L}^{e}_{\ant}$, consider $\boldtau_{\ant}(\varphi)\assign\{\Delta_{1}\varphi\lor\Delta_{\ant}\varphi\equals 1\}$. Upon defining $\boldrho(\varphi\equals\psi)\assign\{\varphi\to\psi,\psi\to\varphi\}$, it is immediate to check that  $\boldtau_{1},\boldrho$ witnesses the algebraizability of $\logic{L}^{e}_{1}$ and $\boldtau_{\ant},\boldrho$ the algebraizability of $\logic{L}^{e}_{\ant}$, both with respect to $\alg{A}^{e}$.  Finally, the fact that the equivalent algebraic semantics of $\logic{L}^{e}$ is $\AlgI\AlgS\AlgP(\alg{A}^{e})$ follows by Theorem \ref{thm: czela on alg}.
 \end{proof}
 
 Of course, the algebraizability of some external versions of a logic, such as  $\logic{\L}_{3}$, is a well-known fact. In this case, the usual way of interpreting equations into formulas differs from the one encoded by $\boldrho$: the standard transformer consists of the set $\{\varphi\to_{\logic{\L}}\psi,\psi\to_{\logic{\L}}\varphi\}$, where $\to_{\logic{\L}}$ is the so-called \L ukasiewicz implication, defined according to the truth tables in Figure \ref{figure: algebras subclassical}.
Clearly the two operations $\to_{\logic{\L}}$ and $\to$ are different, as $\to$ only takes values among $0,1$. However, as required by the general theory of algebraizability, it holds:
\[\varphi\to\psi,\psi\to\varphi\vdash\dashv_{\logic{\L}_{3}}\varphi\to_{\logic{\L}}\psi, \psi\to_{\logic{\L}}\varphi.\]

 Recall that a logic $\logic{L}$ has the Deduction Theorem (DDT) if there is a set of formulas $\ddtset(x, y)$ such that
\begin{align*}
  \Gamma, \varphi \vdash_{\logic{L}} \psi \iff \Gamma \vdash_{\logic{L}} \ddtset(\varphi, \psi). \tag{DDT}\label{DDT}
\end{align*}
Notice also that being algebraizable and having the DDT are independent properties.
The DDT is lost in several subclassical three-valued logics considered so far, such as $\logic{B}$, $\logic{PWK},$  $\logic{K}_{3}$ and $\logic{LP}$.  Differently, other subclassical three-valued logics enjoy the DDT: this is the case, for instance, of $\logic{\L}_{3}, \logic{J}_{3}, \logic{S}$, and $\logic{RM}_{3}$. 
Unsurprisingly, the Deduction Theorem is a further property recaptured in the step from $\logic{L}$ to its external version. The next fact provides the details.

\begin{fact}\label{thm: ddt in external versions}
The external version of a three-valued subclassical logic has the DDT.
\end{fact}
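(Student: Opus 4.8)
The plan is to split the argument into two cases according to the set of designated values, exactly as in the proof of Theorem~\ref{thm: external version is algebraizable}, and to exhibit an explicit witnessing set $\ddtset(x,y)$ in each case. Recall that $\logic{L}^{e}$ is complete with respect to the single finite matrix $\langle\alg{A}^{e},F\rangle$, where $F$ is either $\{1\}$ or $\{1,\ant\}$; so verifying \eqref{DDT} amounts to a finite check on the three-valued algebra $\alg{A}^{e}$, once a candidate $\ddtset$ is fixed. The key observation is that $\alg{A}^{e}$ has enough expressive power via $\Delta_{0},\Delta_{\ant},\Delta_{1}$ to simulate a classical implication: indeed the operation $\varphi\to\psi$ defined in the proof of Theorem~\ref{thm: external version is algebraizable} takes values only in $\{0,1\}$ and behaves, on designated/undesignated arguments, like a two-valued conditional.

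First I would treat the case $\logic{L}^{e}=\logic{L}^{e}_{1}$, where $F=\{1\}$. Here I propose $\ddtset(\varphi,\psi)\assign\{\varphi\to\psi\}$ with $\to$ as above. To verify \eqref{DDT}, fix a homomorphism $g:\alg{Fm}\to\alg{A}^{e}$ with $g[\Gamma]\subseteq\{1\}$. One checks on the truth tables of $\Delta_{0},\Delta_{\ant},\Delta_{1}$ that $g(\varphi\to\psi)=1$ iff ($g(\varphi)\neq 1$ or $g(\psi)=1$); that is, $\varphi\to\psi$ is designated precisely when $\varphi$ being designated forces $\psi$ to be designated. The left-to-right direction of \eqref{DDT} then follows: if $\Gamma,\varphi\vdash\psi$ and $g[\Gamma]\subseteq\{1\}$, then either $g(\varphi)=1$, whence $g(\psi)=1$ by hypothesis and so $g(\varphi\to\psi)=1$, or $g(\varphi)\neq1$, whence $g(\varphi\to\psi)=1$ directly. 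The right-to-left direction is immediate from the same equivalence together with modus ponens for $\to$, which itself reduces to a finite check: if $g(\varphi)=1$ and $g(\varphi\to\psi)=1$ then $g(\psi)=1$.

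For the case $\logic{L}^{e}=\logic{L}^{e}_{\ant}$, where $F=\{1,\ant\}$, the same $\to$ no longer quite works because $\to$ only outputs values in $\{0,1\}$, so $\varphi\to\psi$ is designated iff it equals $1$, which is the condition ``$g(\varphi)$ designated implies $g(\psi)$ designated'' only if we recompute the truth table with respect to $F=\{1,\ant\}$. In fact, inspecting the definition of $\to$, one sees $g(\varphi\to\psi)=1$ iff ($g(\varphi)=0$, or $g(\varphi)=\ant$ and $g(\psi)\in\{\ant,1\}$, or $g(\varphi)=1$ and $g(\psi)=1$); this is \emph{not} the same as ``$g(\varphi)\in F\Rightarrow g(\psi)\in F$'' (it fails when $g(\varphi)=1,g(\psi)=\ant$). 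So I would instead take $\ddtset(\varphi,\psi)\assign\{(\Delta_{1}\varphi\lor\Delta_{\ant}\varphi)\to\psi\}$, or more directly $\{\Delta_{0}\varphi\lor\Delta_{\ant}\varphi\lor\psi\}$ read off the designated-value semantics, and verify by the same finite case analysis that $g$ of this formula lies in $\{1,\ant\}$ exactly when $g(\varphi)\in\{1,\ant\}$ implies $g(\psi)\in\{1,\ant\}$. The remaining verification of \eqref{DDT} is then formally identical to the previous case.

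The main obstacle is purely bookkeeping: getting the candidate term right in the $\logic{L}^{e}_{\ant}$ case, since a term that happens to be a good ``algebraizing'' implication need not be a good ``deduction-theorem'' implication when the filter is $\{1,\ant\}$ rather than $\{1\}$. Once the correct term is isolated, the proof is a finite, mechanical evaluation on $\alg{A}^{e}$, using only that $\logic{L}^{e}$ is complete with respect to $\langle\alg{A}^{e},F\rangle$ and the explicit truth tables of $\Delta_{0},\Delta_{\ant},\Delta_{1}$. No appeal to the theory of nuclei or to Section~\ref{sec: semantic meaning} is needed; the only earlier results invoked are the completeness of $\logic{L}^{e}$ with respect to its defining matrix and, implicitly, the shape of the connectives introduced just before Theorem~\ref{thm: external version is algebraizable}.
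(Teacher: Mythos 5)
Your overall strategy is the right one and matches the paper's: split on whether $F=\{1\}$ or $F=\{1,\ant\}$, exhibit a single witnessing formula $\ddtset(\varphi,\psi)$ whose designation under a valuation $g$ is equivalent to ``$g(\varphi)\in F$ implies $g(\psi)\in F$'', and reduce everything to a finite check on $\alg{A}^{e}$. The problem is that every concrete term you propose fails this equivalence. For $\logic{L}^{e}_{1}$ you claim that $g(\varphi\to\psi)=1$ iff $g(\varphi)\neq 1$ or $g(\psi)=1$, but your own (correct) truth table for $\to$ in the next paragraph contradicts this: when $g(\varphi)=\ant$ and $g(\psi)=0$ the middle disjunct $\Delta_{\ant}\varphi\land(\Delta_{\ant}\psi\lor\Delta_{1}\psi)$ is $1\land 0=0$ and the whole term evaluates to $0$. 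Concretely, $p\vdash_{\logic{L}^{e}_{1}}\Delta_{1}p$ holds, yet $p\to\Delta_{1}p$ takes the value $0$ at $g(p)=\ant$, so $\nvdash_{\logic{L}^{e}_{1}}p\to\Delta_{1}p$ and the left-to-right half of \eqref{DDT} fails for your $\ddtset$.

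The $\logic{L}^{e}_{\ant}$ case fares no better, for either candidate. Writing $\chi=\Delta_{1}\varphi\lor\Delta_{\ant}\varphi$, the term $\chi\to\psi$ reduces to $\Delta_{0}\chi\lor(\Delta_{1}\chi\land\Delta_{1}\psi)$ because $\chi\in\{0,1\}$ forces $\Delta_{\ant}\chi=0$; hence it is designated iff $g(\varphi)\notin F$ or $g(\psi)=1$, and at $g(\varphi)=g(\psi)=\ant$ it equals $0$, so $p\vdash_{\logic{L}^{e}_{\ant}}p$ while $\nvdash_{\logic{L}^{e}_{\ant}}\chi\to p$. The alternative $\Delta_{0}\varphi\lor\Delta_{\ant}\varphi\lor\psi$ breaks the other direction: at $g(\varphi)=\ant$, $g(\psi)=0$ it evaluates to $1\lor 0=1$ in every subclassical algebra (since $\{0,1\}$ is a Boolean subreduct), which is designated even though $g(\varphi)\in F$ and $g(\psi)\notin F$; thus modus ponens for this ``implication'' fails, and taking $\Gamma=\{\ddtset(\varphi,\psi)\}$ kills the right-to-left half of \eqref{DDT}. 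Leaving a bare $\psi$ inside a $\lor$ also makes the value depend on how $\lor$ mixes Boolean with non-Boolean arguments, which varies across subclassical algebras ($1\lor\ant=1$ in $\alg{SK}$ but $1\lor\ant=\ant$ in $\alg{WK}$) --- exactly the dependence the $\Delta$'s are meant to eliminate. The repair is to insist that the witnessing term take values only in $\{0,1\}$ and equal $1$ precisely when $g(\varphi)\in F$ implies $g(\psi)\in F$; the paper's choices $\neg\Delta_{1}\varphi\lor\Delta_{1}\psi$ for $F=\{1\}$ and $\neg(\Delta_{1}\varphi\lor\Delta_{\ant}\varphi)\lor(\Delta_{1}\psi\lor\Delta_{\ant}\psi)$ for $F=\{1,\ant\}$ do exactly this, and with them your verification scheme goes through verbatim.
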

\begin{proof}
The set $\varphi\to_{1}\psi=\neg\Delta_{1}\varphi\lor\Delta_{1}\psi$ witnesses the DDT  for $\logic{L}^{e}_{1}$, 
and $\varphi\to_{\ant}\psi=\neg(\Delta_{1}\varphi\lor\Delta_{\ant}\varphi)\lor(\Delta_{1}\psi\lor\Delta_{\ant}\psi)$ does it so for for $\logic{L}^{e}_{\ant}$. \end{proof}

Notice that  the traditional DDT-set for $\logic{\L}_{3}$ is a singleton containing the formula $x\to_{\logic{\L}}(x\to_{\logic{\L}}y)$, which can be equivalently written as $\neg\Delta_{1}x\lor y$. A quick computation shows that $x\to_{1}y$ is a different operation on $\alg{L}_{3}$ from $x\to_{\logic{\L}} (x\to_{\logic{\L}}y)$, as $1\to_{\logic{\L}} (1\to_{\logic{\L}}\ant)=\ant$, while $1\to_{1}\ant=0$. Thus, as for the notion of algebraizability,  there is a term witnessing the DDT in $\logic{\L}_{3}$ where the  non-classical value $\ant$ never appears in the output when interpreted in $\alg{L}_{3}$.

If we take another look at the definition of algebraizability, we can see that it can be generalized to relate two logics rather than just relating a logic to the equational consequence of a class of algebras. 
This generalization requires an appropriate modification of the maps  $\boldtau,\boldrho$ and leads to the concept of \emph{deductive equivalence}.\footnote{For our purposes, it is inconvenient  to present the details of the beautiful theory underlying this approach, which can be found in \cite{blok2006equivalence} and \cite{galatos2009equivalence}.} 
\begin{definition}\label{def: deductive equivalence}
Let $\logic{L}$ be a logic in the language $\mathcal{L}$  and  $\logic{L}^{\prime}$ be a logic in the language $ \mathcal{L}^{\prime}$. $\logic{L}$  and $\logic{L}^{\prime}$ are deductively equivalent when there exist maps $\boldtau:\alg{Fm}_{\mathcal{L}}\to\mathcal{P}(\alg{Fm}_{\mathcal{L}^{\prime}})$ and $\boldrho:\alg{Fm}_{\mathcal{L}^{\prime}}\to\mathcal{P}(\alg{Fm}_{\mathcal{L}})$ such that, for every $\Gamma\cup{\varphi}\subseteq\alg{Fm}_{\mathcal{L}}$, $\psi\in\alg{Fm}_{\mathcal{L}^{\prime}}$

 \begin{gather*}
 \Gamma\vdash_{\logic{L}}\varphi\iff\boldtau(\Gamma)\vdash_{\logic{L}^{\prime}}\boldtau(\varphi) \tag{Eq1} \label{Eq1}\\
   \psi\dashv\vdash_{\logic{L}^{\prime}}\boldtau\boldrho(\psi)\tag{Eq2} \label{Eq2}.
\end{gather*}
\end{definition}

As we already sketched, subclassical logics come into pairs of logics of the form $(\logic{L}_{1},\logic{L}_{\ant})$ sharing the same defining three-valued algebra. In many cases, $\logic{L}_{1}$ lack any theorem, while $\logic{L}_{\ant}$ lacks antitheorems, and this suffices to show that a faithful mutual interpretation satisfying \ref{Eq1} and \ref{Eq2} between their consequence relations is unavailable. An example is provided by the pairs $(\logic{K}_{3},\logic{LP})$ and $(\logic{B},\logic{PWK})$. The situation is reversed when moving to the external versions. The next theorem indeed shows  that the external versions of each pair of the form $(\logic{L}_{1},\logic{L}_{\ant})$ are deductively equivalent.
\begin{theorem}\label{thm: deductive equivalence}
Let $\logic{L}_{1},\logic{L}_{\ant}$ be three-valued subclassical logics. Then $\logic{L}_{1}^{e},\logic{L}^{e}_{\ant}$ are pairwise deductively equivalent.
\end{theorem}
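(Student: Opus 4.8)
The plan is to exhibit explicit translation maps $\boldtau,\boldrho$ between $\logic{L}_1^e$ and $\logic{L}_\ant^e$ and verify the two conditions \eqref{Eq1}, \eqref{Eq2} of Definition \ref{def: deductive equivalence}. The key observation is that both logics share the \emph{same} underlying algebra $\alg{A}^e$ and differ only in the choice of filter, $\{1\}$ versus $\{1,\ant\}$, and moreover the external operators $\Delta_1,\Delta_\ant,\Delta_0$ are already available in the common language. So the entire task reduces to finding, inside the language $\mathcal{L}\cup\{\Delta_1\}$, a ``switch'' that converts membership in $\{1\}$ into membership in $\{1,\ant\}$ and vice versa, and this should be achievable by a term built from the $\Delta_i$'s. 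Concretely, one direction is trivial: from $\logic{L}_\ant^e$ to $\logic{L}_1^e$ one can take $\boldtau(\varphi)\assign\{\Delta_1\varphi\lor\Delta_\ant\varphi\}$, since for $a\in\alg{A}^e$ we have $\Delta_1 a\lor\Delta_\ant a=1$ exactly when $a\in\{1,\ant\}$ and $=0$ otherwise; thus a homomorphism designates $\Delta_1\varphi\lor\Delta_\ant\varphi$ in the $\{1\}$-matrix iff it designates $\varphi$ in the $\{1,\ant\}$-matrix. For the other direction, from $\logic{L}_1^e$ to $\logic{L}_\ant^e$, the natural candidate is $\boldtau'(\varphi)\assign\{\Delta_1\varphi\}$: here $\Delta_1 a\in\{1,\ant\}$ iff $\Delta_1 a=1$ iff $a=1$, so $\Delta_1\varphi$ is designated in the $\{1,\ant\}$-matrix iff $\varphi$ is designated in the $\{1\}$-matrix.

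The steps I would carry out are as follows. First, fix the four maps: $\boldtau$ and $\boldrho$ going from $\logic{L}_1^e$ to $\logic{L}_\ant^e$ and back. Guided by the above, set $\boldtau(\varphi)\assign\{\Delta_1\varphi\}$ (from $\logic{L}_1^e$-formulas to $\logic{L}_\ant^e$-formulas) and $\boldrho(\psi)\assign\{\Delta_1\psi\lor\Delta_\ant\psi\}$ (from $\logic{L}_\ant^e$-formulas back to $\logic{L}_1^e$-formulas), or the symmetric pair — I would pick whichever orientation matches the quantifier pattern in Definition \ref{def: deductive equivalence}. Second, verify \eqref{Eq1}: $\Gamma\vdash_{\logic{L}_1^e}\varphi \iff \boldtau(\Gamma)\vdash_{\logic{L}_\ant^e}\boldtau(\varphi)$. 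Using that $\logic{L}_1^e$ is complete with respect to $\langle\alg{A}^e,\{1\}\rangle$ and $\logic{L}_\ant^e$ with respect to $\langle\alg{A}^e,\{1,\ant\}\rangle$, unravel both sides to statements about homomorphisms $h:\alg{Fm}\to\alg{A}^e$, and apply the pointwise fact $h(\Delta_1\varphi)\in\{1,\ant\}\Leftrightarrow h(\Delta_1\varphi)=1\Leftrightarrow h(\varphi)=1\Leftrightarrow h(\varphi)\in\{1\}$; this gives a bijective correspondence between the countermodels on each side. Third, verify \eqref{Eq2}: $\psi\dashv\vdash_{\logic{L}_\ant^e}\boldtau\boldrho(\psi)$, i.e. that $\psi$ and $\Delta_1(\Delta_1\psi\lor\Delta_\ant\psi)$ are interderivable in $\logic{L}_\ant^e$ — again reduced to the pointwise computation $\Delta_1(\Delta_1 a\lor\Delta_\ant a)=1$ iff $\Delta_1 a\lor\Delta_\ant a=1$ iff $a\in\{1,\ant\}$ iff $a$ is designated, so the two formulas are designated under exactly the same homomorphisms. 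Finally, state the conclusion.

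I do not expect a genuine obstacle here: the result is essentially a packaging of the elementary fact that $\Delta_1$ and $\Delta_1\lor\Delta_\ant$ convert between the two filters on $\alg{A}^e$, together with the completeness of each logic with respect to its defining matrix (Definition \ref{def: external version 3-val logic}) and Assumption \ref{Assumption: Alg} / Theorem \ref{thm: czela on alg}. The only points requiring a little care are: (a) getting the direction of the maps and the two conditions \eqref{Eq1}, \eqref{Eq2} consistent with Definition \ref{def: deductive equivalence} (the definition is not symmetric on its face, so one must check the composite $\boldtau\boldrho$ in the right order); (b) making sure $\boldtau$ and $\boldrho$ are finite sets of formulas so that no finitariness issue arises — here they are singletons, so this is immediate; and (c) observing that the same pair of terms works uniformly for \emph{every} subclassical $\logic{L}$, since the definitions of $\Delta_1,\Delta_\ant$ and the two candidate filters $\{1\},\{1,\ant\}$ do not depend on the particular $\alg{A}$. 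A clean way to present the whole thing is to first record the pointwise semantic facts about $\Delta_1 a$ and $\Delta_1 a\lor\Delta_\ant a$ as a short displayed computation, and then let \eqref{Eq1} and \eqref{Eq2} fall out by the matrix-completeness of the two logics.
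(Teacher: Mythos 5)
Your proposal is correct and matches the paper's own proof essentially verbatim: the paper also takes $\boldtau(\varphi)=\{\Delta_{1}\varphi\}$ from $\logic{L}^{e}_{1}$ to $\logic{L}^{e}_{\ant}$ and $\boldrho(\psi)=\{\Delta_{1}\psi\lor\Delta_{\ant}\psi\}$ back, verifies \eqref{Eq1} by the pointwise observation that $h(\Delta_{1}\gamma)\in\{1,\ant\}$ iff $h(\gamma)=1$ (with the converse by contraposition), and verifies \eqref{Eq2} by checking that $h(\varphi)\in\{1,\ant\}$ iff $h(\Delta_{1}(\Delta_{1}\varphi\lor\Delta_{\ant}\varphi))=1$. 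No substantive difference.
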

\begin{proof}
Recalling Definition \ref{def: deductive equivalence}  we need to prove that 
\begin{align*}
& \Gamma\vdash_{\logic{L}^{e}_{1}}\varphi\iff\Delta_{1}\Gamma\vdash_{\logic{L}^{e}_{\ant}}\Delta_{1}\varphi \tag{De1} \label{De1}\\
& \varphi\vdash_{\logic{L}^{e}_{\ant}}\Delta_{1}(\Delta_{1}\varphi\lor\Delta_{\ant}\varphi) \text{ and } \Delta_{1}(\Delta_{1}\varphi\lor\Delta_{\ant}\varphi)\vdash_{\logic{L}^{e}_{\ant}}\varphi.\tag{De2}\label{De2}
\end{align*}
We begin by proving (\ref{De1}). Assume $\Gamma\vdash_{\logic{L}^{e}_{1}}\varphi$ and let $h:\alg{Fm}^{e}\to\alg{A}^{e}$ be such that $h(\Delta_{1}\gamma)\in\{1,\ant\}$ for each $\gamma\in\Gamma$. By definition of $\Delta_{1}$, this entails $h(\gamma)=1$. So, by assumption, $h(\varphi)=1$. Thus, we conclude $h(\Delta\gamma)=1$, which proves $\Delta_{1}\Gamma\vdash_{\logic{L}^{e}_{\ant}}\Delta_{1}\varphi$. For the converse we reason by contraposition, so suppose there exists $h:\alg{Fm}^{e}\to \alg{A}^{e}$ such that $h(\gamma)=1$ for each $\gamma\in\Gamma$ and $h(\varphi)\neq 1$. . Thus, $h(\Delta_{1}\gamma)=1$ for each $\gamma\in\Gamma$ and $h(\Delta_{1}\varphi)=0$, i.e. $\Delta_{1}\Gamma\nvdash_{\logic{L}^{e}_{\ant}}\Delta_{1}\varphi$, as desired.

For (\ref{De2}), it is immediate to check that, for any $h:\alg{Fm}^{e}\to\alg{A}^{e}$, it holds $h(\varphi)\in\{1,\ant\}$ if and only if $h(\Delta_{1}(\Delta_{1}\varphi\lor\Delta_{\ant}\varphi))=1$.
 \end{proof}

Thus, adding external operators to a subclassical logic ensures the recovery of several desirable properties of classical logic, such as the Deduction-Detachment Theorem--and consequently, modus ponens and theoremhood--as well as algebraizability. Moreover, this method of constructing expansions naturally yields examples of pairwise deductively equivalent logics.

\end{document}